\newtheorem{theorem}{Theorem}[section]
\newtheorem{lemma}[theorem]{Lemma}
\newtheorem{proposition}[theorem]{Proposition}
\theoremstyle{remark}
\newtheorem{remark}[theorem]{Remark}
\numberwithin{equation}{section}
\theoremstyle{definition}
\newtheorem{definition}[theorem]{Definition}
\newcommand{\ep}{\epsilon}
\newcommand{\R}{\mathbb{R}}
\newcommand{\N}{\mathbb{N}}
\newcommand{\scal}[2]{\langle {#1} , {#2}\rangle}
\newcommand{\Sl}{\Sigma_{\lambda}}
\newcommand{\Spl}{\Sigma'_{\lambda}}
\newcommand{\Slo}{\Sigma_{\lambda_1}}
\begin{document}

\title{Symmetry and rigidity for the hinged composite plate problem}

\author[F.\ Colasuonno]{Francesca Colasuonno}
\author[E.\ Vecchi]{Eugenio Vecchi}
\address[F.\ Colasuonno]{Dipartimento di Matematica ``Giuseppe Peano''\newline\indent
	Universit\`a degli Studi di Torino \newline\indent
	Via Carlo Alberto 10, 10123, Torino, Italy}
\email{francesca.colasuonno@unito.it}

\address[E.\ Vecchi]{Dipartimento di Matematica ``Guido Castelnuovo''\newline\indent
	Sapienza Universit\`a di Roma \newline\indent
	P.le Aldo Moro 5, 00185, Roma, Italy}
\email{vecchi@mat.uniroma1.it}
\thanks{{\bf Acknowledgements.} The authors warmly thank the anonymous referee for correcting a
mistake in the original version of the paper and for the suggestions that improved the paper.\\
The authors are grateful to Prof. Sagun Chanillo for having suggested the problem.
The authors thank also Prof. Bruno Franchi for the interesting discussions on the subject.\\
F.C. and E.V. are supported by {\em Gruppo Nazionale per l'Analisi Ma\-te\-ma\-ti\-ca, la Probabilit\`a e le loro Applicazioni} (GNAMPA) of the {\em Istituto Nazionale di Alta Matematica} (INdAM). E.V. is partially supported by the INdAM-GNAMPA Project 2018 ``Problemi di curvatura relativi ad operatori ellittico-degeneri''.}

\subjclass[2010]{35J40
, 35J47
, 31B30
, 35B06
, 74K20
}

\keywords{Composite plate problem, biharmonic operator, Navier boundary conditions, moving plane method, symmetry of solutions, rigidity results.}

\date{\today}

\begin{abstract}
The composite plate problem is an eigenvalue optimization
problem related to the fourth order operator $(-\Delta)^2$.
In this paper we continue the study started in \cite{CoVe}, 
focusing on symmetry and rigidity
issues in the case of the {\it hinged composite plate problem},
a specific situation that allows us to exploit classical techniques
like the moving plane method.
\end{abstract}

\maketitle


\section{Introduction}\label{Intro}
The composite plate problem is an eigenvalue optimization problem
that extends to the fourth order case the composite membrane problem
extensively studied e.g., in  \cite{CGIKO00,CGK,CK08,CKT,Sha,Chanillo13,CupVe}, see also \cite{HZ,TZ} and references therein for related problems. 
One of the interesting situations is provided by
the so called hinged composite plate, which can be described as follows: build a hinged plate, of prescribed mass and shape, out of different materials sharing the same elastic properties but having different densities, in such a way that its principal frequency is as low as possible.  

We introduce below the above problem from a mathematical point of view. Let $\Omega \subset \R^n$ be a bounded domain,  
$0<h < H$ be two positive constants, and $M \in [h |\Omega|, H |\Omega|]$.
We define the set of admissible densities as 
\begin{equation*}
\mathrm{P} := \left\{ \rho:\Omega \to \R: \, \int_{\Omega}\rho(x)\,dx=M, \,\, h \leq \rho \leq H \, \textrm{in } \Omega \right\}.
\end{equation*}
By {\it hinged composite plate problem} we mean the following minimization problem
\begin{equation}\label{Comp}
\Theta(h,H,M) := \inf_{\rho \in \mathrm{P}}\;\; \inf_{u\in H^2(\Omega)\cap H^{1}_{0}(\Omega)\setminus\{0\}} \frac{\mathop{\mathlarger{\int_{\Omega}}}(\Delta u)^2}{\mathop{\mathlarger{\int_{\Omega}}}\rho \, u^2}.
\end{equation}
A couple $(u,\rho)$ which realizes the double infimum in \eqref{Comp}
is called {\it optimal pair}. \\
To the best of our knowledge, the first results concerning problem \eqref{Comp} have been obtained
in \cite{CoVe}. 
The Euler-Lagrange equation associated to \eqref{Comp} gives rise to the following fourth order Navier problem 
\begin{equation}\label{P}
\left\{ \begin{array}{rl}
          \Delta^2 u = \Theta \rho  u & \quad \textrm{in } \Omega,\\
					u= \Delta u =0 & \quad \textrm{on } \partial \Omega.\\
					\end{array}\right.
\end{equation}
A first crucial aspect when dealing with fourth order PDE's like \eqref{P} is the
positivity of the solutions. This issue is closely related to the validity, or
non validity, of a maximum principle, and it is one of the most studied topics
in the literature of higher order PDE's. We refer to the monograph \cite{GGS} for
a wide and comprehensive treatment of the subject, cf. also \cite{PucRad,PS2} and references therein for more results on polyharmonic problems, and \cite{Birindelli,Sirakov} and references therein for maximum principles for cooperative elliptic systems.\\
In our case the maximum principle holds essentially thanks to the fact that we work with Navier boundary conditions, and the proof of the positivity is slightly simplified because  the solutions of \eqref{P} we are considering are {\it minimizers} of the variational problem \eqref{Comp}, cf. also \cite[Proposition 5.1]{CoVe}.
This enables us to assume --without loss of generality-- that if $(u,\rho)$ is an optimal pair, $u$ is of fixed sign, say positive, under suitable assumptions on the domain $\Omega$, cf. Proposition~\ref{pos}.
Furthermore, in \cite[Theorems 1.3 and 1.4]{CoVe}, we give an explicit representation of the optimal configuration $\rho$, namely if $(u,\rho)$ is an optimal pair, 
\begin{equation}\label{rho}
\rho (x) = \rho_u (x) := h \, \chi_{\{u \leq t\}}(x) + H \, \chi_{\{u >t\}}(x)\quad	\mbox{for all }x\in\Omega
\end{equation}
for a suitable $t=t(h,H,\Omega,u)>0$, and we prove
that, if $\partial \Omega$ is smooth enough, every optimal pair $(u,\rho)$ satisfies 
\begin{equation}\label{reg}
u \in C^{3,\alpha}(\overline{\Omega}) \cap W^{4,q}(\Omega) \quad \textrm{for all } \alpha \in (0,1) \textrm{ and } q \geq 1.
\end{equation}
In particular, the combination of the regularity of $u$ up to the boundary
and the boundary condition $u=0$ on $\partial \Omega$, yields that the set $\{u \leq t\}$ contains a tubular
neighborhood of $\partial \Omega$.
We point out that the existence of optimal pairs as proved in \cite{CoVe} 
does not rely on the regularity of the boundary $\partial \Omega$. The only
part where the regularity of the boundary is truly exploited is to get
the {\it sharp} regularity up to the boundary. We refer to Section \ref{Sec5}
for  further comments on regularity in the case of convex domains.
The eigenvalue optimization problem \eqref{Comp} presents many interesting features, among which the study of preservation of symmetry is quite relevant, especially in view of the results proved in \cite{CGIKO00} for the {\it second order problem}. In particular, symmetry preserving properties are necessary conditions for uniqueness of optimal configurations. Indeed, if an optimal pair $(u,\rho)$ does not preserve the symmetry of the domain, it is always possible to realize the same double infimum $\Theta$ by taking the symmetric of $(u,\rho)$ as a different optimal pair. Clearly, uniqueness of optimal pairs in this context has to be considered up to a multiplicative constant for $u$, since by zero degree homogeneity of \eqref{Comp} (w.r.t. $u$), if $(u,\rho)$ is an optimal pair, for every constant $c\neq 0$, also $(c u,\rho)$ is  an optimal pair of the same problem.
In this perspective, in \cite{CoVe} we proved that if $\Omega=B$ is a ball, there exists
a unique optimal pair $(u,\rho)$, and $u$ is radial, positive and radially decreasing. As a consequence, the sublevel set $\{u \leq t\}$ appearing in \eqref{rho} is a ring-shaped region. \\
\indent In this paper, continuing the study started in \cite{CoVe}, we prove a rigidity result and a symmetry preservation property of optimal pairs for more general sets $\Omega$. 

Throughout the paper, given $x\in\partial \Omega$,  $\nu(x)$ denotes the external unit normal to $\Omega$ at $x$.

\indent We state first the rigidity result.
\begin{theorem}\label{Main1}
Let $\partial\Omega$ be $C^4$-smooth and let $(u,\rho)$ be an optimal pair for \eqref{Comp}. If $u$ satisfies the additional condition
\begin{equation}\label{Over}
\frac{\partial u}{\partial \nu}= c\quad \textrm{on } \partial \Omega\quad\mbox{for some }c<0,
\end{equation}
then $\Omega$ is a ball and $u$ is radially symmetric and radially decreasing.
\end{theorem}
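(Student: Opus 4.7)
The plan is to run a moving-plane argument in the spirit of Serrin after recasting \eqref{P} as a cooperative second-order system via $v:=-\Delta u$:
\begin{equation*}
-\Delta u=v,\qquad -\Delta v=\Theta\rho u\quad\text{in }\Omega,\qquad u=v=0\quad\text{on }\partial\Omega.
\end{equation*}
Proposition~\ref{pos} gives $u>0$ in $\Omega$, and the strong maximum principle applied to the second equation then yields $v>0$ in $\Omega$. Fix a direction, which without loss of generality is $e_1$, and for $\lambda$ decreasing from $\sup_\Omega x_1$ introduce the cap $\Sl$, its reflection $\Spl$ across $\{x_1=\lambda\}$, and the differences $w_\lambda:=u_\lambda-u$, $z_\lambda:=v_\lambda-v$ on $\Spl$, where $u_\lambda(x):=u(\xl)$ and $v_\lambda$ is defined analogously. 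The structural key is that formula \eqref{rho} makes $\rho_u$ nondecreasing in $u$: whenever $u_\lambda(x)\ge u(x)>0$ at some $x\in\Spl$, by inspection $\rho_\lambda(x)\ge\rho(x)$. Consequently, on the set where $w_\lambda\ge 0$,
\begin{equation*}
-\Delta w_\lambda=z_\lambda,\qquad -\Delta z_\lambda=\Theta(\rho_\lambda u_\lambda-\rho u)\ge\Theta\rho_\lambda w_\lambda,
\end{equation*}
a cooperative system of differential inequalities with nonnegative boundary data: $w_\lambda=z_\lambda=0$ on $\{x_1=\lambda\}\cap\Omega$, while $w_\lambda,z_\lambda\ge 0$ on $\partial\Omega\cap\overline{\Spl}$.

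For $\lambda$ close to $\sup_\Omega x_1$ the cap $\Spl$ has small measure, so the small-domain maximum principle for cooperative systems, combined with Hopf's lemma applied componentwise, gives $w_\lambda,z_\lambda\ge 0$ in $\Spl$, strictly in the interior. Let $\lambda_1$ denote the infimum of the $\lambda$ for which these inequalities persist up to $\sup_\Omega x_1$. A standard continuation argument shows that at the critical value either (i)~$\Splo$ is internally tangent to $\partial\Omega$ at a point $p\in\partial\Omega\setminus\{x_1=\lambda_1\}$, or (ii)~the hyperplane $\{x_1=\lambda_1\}$ is tangent to $\partial\Omega$ at some point $p$ (i.e.\ $\nu(p)\cdot e_1=0$). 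In either scenario, unless $u\equiv u_{\lambda_1}$ in $\Splo$, Hopf-type lemmas applied to the cooperative pair $(w_{\lambda_1},z_{\lambda_1})$ produce at $p$ a strict inequality between $\partial u_{\lambda_1}/\partial\nu$ and $\partial u/\partial\nu$. The overdetermined condition \eqref{Over} forces $\partial u/\partial\nu=c$ at both $p$ and its reflection, so this strict inequality is incompatible with \eqref{Over}; configuration (ii) is handled by the fourth-order analogue of Serrin's corner lemma applied to $w_{\lambda_1}$ and $z_{\lambda_1}$, which is where the $C^4$-regularity of $\partial\Omega$ and \eqref{reg} are used to justify expansions of $u$ and $v$ up to second order at $p$.

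It then follows that $u\equiv u_{\lambda_1}$ in $\Splo$, so $\Omega$ is symmetric with respect to $\{x_1=\lambda_1\}$ and $u$ shares this symmetry. Since $e_1$ was arbitrary, $\Omega$ admits a hyperplane of symmetry in every direction and is therefore a ball, on which $u$ is automatically radial; the sign assumption $c<0$ together with the positivity of $u$ yields radial monotonicity. The main technical obstacle, beyond the customary care needed to run moving planes for a fourth-order problem with Navier data, is the discontinuity of the nonlinearity $u\mapsto\rho_u u$ at the threshold $u=t$: this is circumvented by the pointwise monotonicity observation above, which turns the jump term $(\rho_\lambda-\rho)u$ into a favorable sign on $\{w_\lambda\ge 0\}$ and thereby sidesteps any need to linearize the nonlocal map $u\mapsto\rho_u$.
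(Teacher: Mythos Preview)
Your proposal is correct and follows essentially the same route as the paper: recast \eqref{P} as the cooperative system \eqref{Sys}, exploit the monotonicity of $\rho_u$ in $u$ (the paper isolates this as Lemma~\ref{4cases}) to obtain the right sign for $-\Delta z_\lambda$, run the moving plane, and at the critical position rule out strict inequality via Hopf's lemma in case~(i) and Serrin's corner lemma in case~(ii). Two minor remarks: the paper starts the procedure with a Hopf-based boundary lemma (Lemma~\ref{4.1}) rather than the small-domain maximum principle you invoke, and in case~(ii) it is the ordinary second-order corner lemma applied to $w_{\lambda_1}$ (using the $C^{3,\alpha}$ regularity of $u$ to verify second-order vanishing at $Q$), not a genuinely fourth-order variant.
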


Theorem \ref{Main1} deals with an overdetermined problem in the spirit of Serrin \cite{Serrin}
and Weinberger \cite{Weinberger}. The techniques used by Serrin and Weinberger in their papers are very different:
the first one is based on
the moving plane method and maximum principle, while the second relies
on integral identities and the construction of a suitable P-function.\\
Starting from the seminal papers \cite{Serrin,Weinberger}, many authors
started studying possible generalizations of those results for different
operators. In particular, there has been a certain attention devoted to fourth order overdetermined problems.
Let us briefly survey part of the results already available in the literature. 
In \cite{Troy81}, Troy proved a rigidity result for second order elliptic systems
imposing as many extra conditions as the number of the equations, namely  
\begin{equation}\label{Troy-sys}
\left\{ \begin{array}{rl}
            -\Delta u_i = f_i(u_1,\dots,u_m) & \textrm{in } \Omega,\smallskip\\
            u_i >0 & \textrm{in } \Omega,\smallskip\\
			u_i = 0 & \textrm{on } \partial \Omega,
\end{array}\right.\quad i=1,\dots,m.
\end{equation}
We observe that under suitable conditions on the $f_i$'s, for $m=2$, \eqref{Troy-sys} can
be formulated as a fourth order boundary value problem similar to the one we are interested in.
In \cite{Bennett}, Bennett considered a pure Dirichlet overdetermined problem,
namely 
\begin{equation*}
\left\{ \begin{array}{rl}
            \Delta^2 u = -1 & \textrm{in } \Omega,\smallskip\\
						u=\displaystyle{\frac{\partial u}{\partial \nu}} = 0 & \textrm{on } \partial \Omega,\smallskip\\
						\Delta u = c & \textrm{on } \partial \Omega,
\end{array}\right.
\end{equation*}
\noindent and showed that the existence of a solution $u \in C^{4}(\overline{\Omega})$
implies that $\Omega$ is a ball, constructing an appropriate P-function in the spirit
of Weinberger. 
In \cite{PaSc}, Payne and Schaefer considered the Navier version of
the problem considered by Bennett in \cite{Bennett}, in the case of a planar domain $\Omega$ which is star-shaped with
respect to the origin. The extension to any dimension for $C^2$  bounded domains was then provided in \cite{PhRa}
by Philippin and Ragoub, and subsequently by Goyal and Schaefer in
\cite{GoSc} with a different proof. 
In the present article, as in the aforementioned papers \cite{Troy81,PaSc,PhRa}, the
main tool for proving Theorem \ref{Main1} is the moving plane method. We stress here that
one of the keys that allows the use of such a classical technique is the monotonicity
result provided by Lemma \ref{4cases}, which is heavily based on \eqref{rho}.
\\

As already mentioned, the second main result concerns the preservation of symmetry; 
we extend to more general domains the results proved for the ball in \cite{CoVe}. In the next statement, we use the notation for the moving plane method recalled in Section \ref{Sec2}.
 
\begin{theorem}\label{Main2}
Let $\Omega$ be symmetric and convex  with respect to the hyperplane $\{x_1=0\}$, and
with $C^{4}$-smooth boundary $\partial \Omega$. Let $\lambda_1$ be as in \eqref{DefLambda1} and suppose that $\lambda_1 = 0$.
If $(u,\rho)$ is an optimal pair, then $u$ is symmetric with respect to $\{x_1=0\}$ and strictly decreasing in $x_1$ for $x_1 > 0$. 
Consequently, $\rho$ is symmetric with respect to the same hyperplane as well. 
Furthermore, the set $\{u >t\}$ is convex with respect to  $\{x_1=0\}$.
\end{theorem}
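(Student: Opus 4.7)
The plan is to apply the moving plane method in the $x_1$-direction, reducing \eqref{P} to the cooperative second-order system
\begin{equation*}
-\Delta u = v, \qquad -\Delta v = \Theta \rho u \quad \text{in } \Omega, \qquad u = v = 0 \text{ on } \partial\Omega,
\end{equation*}
where $v := -\Delta u$ (the second boundary condition coming from the Navier condition $\Delta u = 0$). Using the notation $\Sigma_\lambda$, $x_\lambda$, $u_\lambda$, $v_\lambda$, $\rho_\lambda$ of Section~\ref{Sec2}, Proposition~\ref{pos} ensures $u, v > 0$ in $\Omega$, and Lemma~\ref{4cases} — through the two-value structure \eqref{rho} of $\rho$ — delivers the sign control on $\rho_\lambda u_\lambda - \rho u$ needed to run the comparison for the system.

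From the assumption $\lambda_1 = 0$ together with the definition \eqref{DefLambda1}, I would first extract that $u_\lambda \geq u$ and $v_\lambda \geq v$ in $\Sigma_\lambda$ for every $\lambda \geq 0$; at $\lambda = 0$ this reads $u(-x_1, x') \geq u(x_1, x')$ for $x_1 > 0$. Because $\Omega$ is symmetric with respect to $\{x_1 = 0\}$, the reflected pair $(\til u, \til \rho)$ with $\til u(x) := u(-x_1, x')$ is again an optimal pair for \eqref{Comp}, and running the same argument on it (equivalently, the moving plane method from the left on $u$, which by the symmetry of $\Omega$ also stops at $0$) yields the reverse inequality on $\{x_1 > 0\}$. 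Combining both forces $u(-x_1, x') = u(x_1, x')$ for every $x_1 > 0$, and then \eqref{rho} transfers the symmetry to $\rho$.

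Strict monotonicity in $x_1 > 0$ is a strong maximum principle statement for the non-negative differences $w_\lambda := u_\lambda - u$ and $z_\lambda := v_\lambda - v$ on $\Sigma_\lambda$ with $\lambda > 0$: they satisfy $-\Delta w_\lambda = z_\lambda \geq 0$ and $-\Delta z_\lambda = \Theta(\rho_\lambda u_\lambda - \rho u) \geq 0$ by Lemma~\ref{4cases}. An interior zero of $w_\lambda$ would propagate by the strong maximum principle across the connected component of $\Sigma_\lambda$ containing it and, since for $\lambda > 0$ the reflected cap sits strictly inside $\Omega$ (by the convexity of $\Omega$ in the $x_1$-direction), contradict the fact that $u = 0$ on $\partial\Omega$ while $u_\lambda > 0$ at the corresponding reflected boundary points. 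Hence $u_\lambda > u$ strictly in $\Sigma_\lambda$ for every $\lambda > 0$, i.e., $x_1 \mapsto u(x_1, x')$ is strictly decreasing on $\{x_1 > 0\}$.

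The convexity of $\{u > t\}$ with respect to $\{x_1 = 0\}$ is then immediate: by convexity of $\Omega$ in the $x_1$-direction, each line $\{(s, x') : s \in \R\} \cap \Omega$ is a symmetric interval around $s = 0$, and on it $u$ is even and strictly decreasing in $|s|$, so $\{s : u(s, x') > t\}$ is either empty or a symmetric sub-interval. I expect the main technical obstacle to be the interior strict-positivity step of the third paragraph: running the strong maximum principle on a \emph{cooperative} system whose right-hand side $\rho_\lambda u_\lambda - \rho u$ is only pointwise non-negative via Lemma~\ref{4cases} and, being assembled from characteristic functions, is discontinuous, so that propagation of zeros and boundary behaviour must be handled with care through the $u$--$v$ coupling rather than by a direct maximum principle for $\Delta^2$.
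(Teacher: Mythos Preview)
Your proposal is correct and follows essentially the same route as the paper: reduce \eqref{P} to the cooperative system for $(u,v)=(u,-\Delta u)$, use Lemma~\ref{4cases} to control the sign of $\rho_\lambda u_\lambda - \rho u$, run the moving plane comparison down to $\lambda_1=0$ (the paper packages this step as Lemma~\ref{4.3}, proved via Lemmas~\ref{4.1} and~\ref{4.2}), and then obtain symmetry by combining the inequality at $\lambda=0$ with the one coming from the opposite direction; strict monotonicity and the convexity of $\{u>t\}$ follow exactly as you describe. The only cosmetic difference is that the paper phrases the monotonicity via Hopf's Lemma on each $T_\lambda$ rather than your strong-maximum-principle propagation of an interior zero of $w_\lambda$, but the two arguments are interchangeable here.
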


This theorem guarantees that optimal pairs preserve reflection symmetries of the domain, in presence of some convexity assumption. 
Since smooth strictly convex domains, and in particular balls, are covered
by the assumptions of Theorem \ref{Main2}, we note that \cite[Theorem~1.5]{CoVe} 
can be seen as a corollary of it.

Let us spend a few words concerning the hypotheses and the proof of Theorem \ref{Main2}.
$C^4$-smoothness (actually even just convexity) of $\Omega$ is a sufficient condition to ensure that
\eqref{P} can be {\it equivalently} written as the following second order {\it cooperative elliptic system} (we stress that the equivalence between weak solutions of \eqref{P} and weak solutions of \eqref{Sys} does not always hold, see Section \ref{Sec5})
\begin{equation}\label{Sys}
\left\{ \begin{array}{rl}
          -\Delta u_1 = u_2 & \quad \textrm{in } \Omega,\\  
          -\Delta u_2 = \Theta \rho  u_1 & \quad \textrm{in } \Omega,\\
					u_1= u_2 =0 & \quad \textrm{on } \partial \Omega.
					\end{array}\right.
\end{equation}
Due to its cooperative structure, \eqref{Sys} --and consequently the fourth order problem \eqref{P}-- inherits the strong maximum principle holding for second order elliptic equations. 
This allows us to apply the moving plane method for proving Theorem \ref{Main2}. 

\begin{figure}[h]
\includegraphics[scale=0.8]{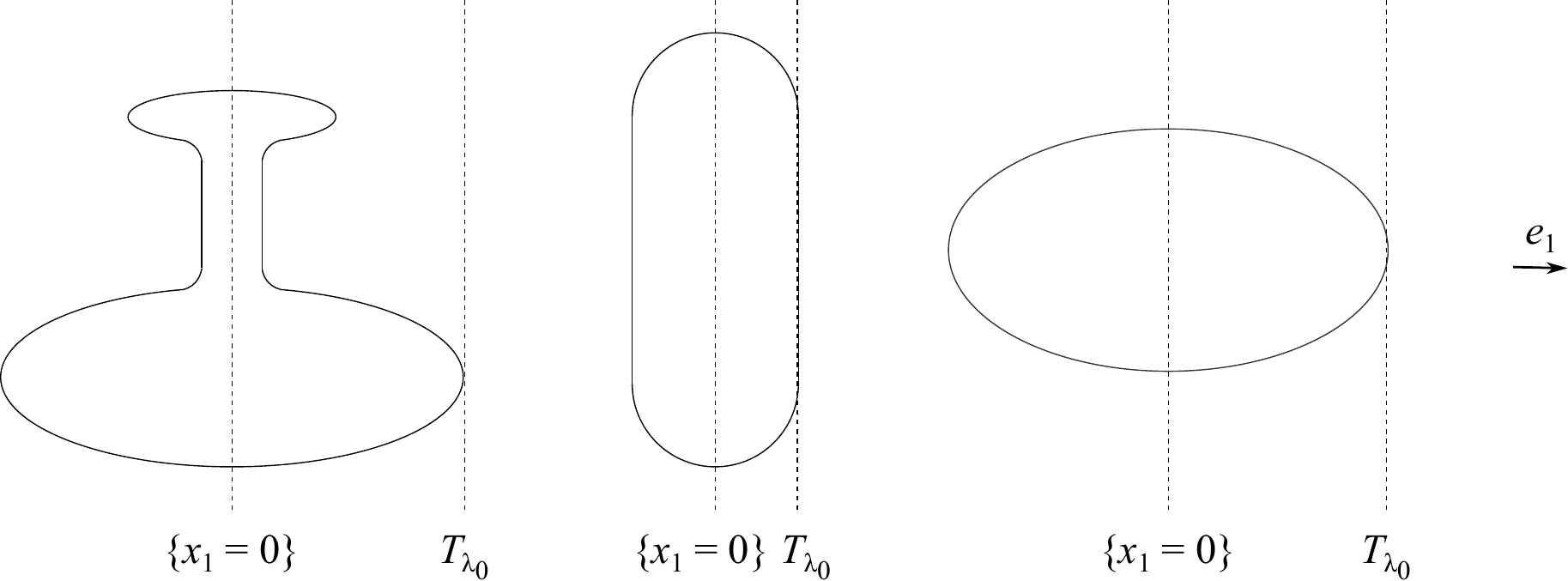}
\caption{Qualitative representation of some planar domains covered by Theorem \ref{Main2}. They are meant to be $C^4$-smooth and symmetric and convex with respect to $\{x_1=0\}$. Note that points with horizontal tangent or flat parts in the $e_1$ direction --i.e., orthogonal to the symmetry axis $\{x_1=0\}$-- are not allowed for $\partial \Omega\cap\{x_1 > 0\}$.
}\label{fig:domini}
\end{figure}

As already mentioned, the convexity of the domain is enough to have a maximum principle for \eqref{P}, cf. Section \ref{Sec5}; the main reason for requiring $\Omega$ to be $C^{4}$-smooth is to ensure $C^1$-regularity {\it up to the boundary} of both $u$ and $\Delta u$. This is needed in a series of lemmas proved in Section \ref{Sec3}, in the spirit of \cite{Troy81} by Troy
 and of \cite{GNN} by Gidas-Ni-Nirenberg, which are the core of the proof of Theorem \ref{Main2}. On the other hand, the assumption  
$\lambda_1=0$
is essentially due to the lack of regularity of $\rho_u$, which prevents any use of the Mean Value Theorem for applying the maximum principle also to the auxiliary cooperative elliptic system (see for instance \eqref{sys_w}) classically arising in the moving plane method. We want to stress that, while the hypothesis $\lambda_1=0$ seems to be mainly related to the technique used, a convexity hypothesis (at least w.r.t. $\{x_1=0\}$) in Theorem~\ref{Main2} seems to be crucial for the validity of the result. Indeed, some numerical evidence in \cite{KK} (cf. Fig.~6.9 therein) shows that symmetry breaking phenomena appear for thin annuli.\\

At this stage, it seems necessary to spend a few words regarding the possibility to relax the assumptions on $\Omega$.
In order to treat the case of more general domains, it would be quite natural to try to adapt the moving plane method as developed by Berestycki and Nirenberg
in \cite{BN}, for which it is enough to work with bounded sets. However, in our case, we already mentioned
that we need some further assumption (e.g. convexity) to ensure the equivalence between the fourth order PDE
and the second order elliptic system. In addition, we have some more considerations.
Essentially, the technique of Berestycki and Nirenberg is composed by two parts: the starting procedure and the continuation.
The first one is usually performed by means of {\it maximum principle in small domains},
in contrast with the smooth case, where one can use the Hopf Lemma.
In our case, thanks to the explicit expression \eqref{rho} of the optimal configuration $\rho_{u}$, we can
use the maximum principle in small domains for cooperative elliptic systems proved by de Figueiredo in \cite{deFigueiredo}, to let the moving plane method start. 
In principle, this technique would be the appropriate tool to deal
with more general open sets $\Omega$ also in the crucial part of the {\it continuation of the moving plane method},
without any further restrictive requirements. 
However, in our case, due to the lack of regularity of $\rho_u$,
it seems necessary to have more information on the
{\it structure} of the set $\{u \leq t\}$ ($(u,\rho_u)$ being optimal pair) or, better, its complementary in $\Omega$. In particular, if we know that
the set $\{u >t\}$ is symmetric and convex with respect to the hyperplane $\{x_1=0\}$, 
then we can adapt the proof of \cite[Theorem 1.3]{BN} to our setting, including sets $\Omega$ which are merely convex. This partial symmetry preserving result for (possibly non-smooth) convex domains $\Omega$ is the content of Proposition \ref{prop:convex}. Without any structure information on $\{u>t\}$ we are not able to prove, with this technique, symmetry preserving properties 
for sets having $\lambda_1 \neq 0$ but $\lambda_2 = 0$ (defined in \eqref{DefLambda2}),
see Figure~\ref{Fig2} and Remark \ref{RmkSec3}.\\

\smallskip

The structure of the paper is the following:
in Section \ref{Sec2} we fix the standing notation for implementing the
moving plane technique and recall some basic results from the literature
to make the paper self-contained. In Section \ref{Sec3} we state and prove some preliminary lemmas, while in Section \ref{Sec4} we prove Theorems \ref{Main1} and \ref{Main2}. Finally, in Section \ref{Sec5}, we treat the case of convex domains and prove Proposition \ref{prop:convex}.


\section{Notation and background}\label{Sec2}
In this section we recall some known results, prove preliminary lemmas, 
and introduce definitions and notation which will be useful in the next sections.
We stress that most of the results we are going to present in this section
are valid for general convex sets, see Remark~\ref{Rmk5}, but we state them in the less general context provided by Theorem~\ref{Main2}.

\medskip\medskip

\noindent{\bf Positivity.} We prove below some positivity results for optimal pairs. 
For sake of completeness, we start with a maximum principle that slightly
extends \cite[Lemma 1]{FGW}.
\begin{lemma}\label{max_principle}
Let $\Omega$ be a $C^4$-smooth bounded domain and set 
$$
\mathcal C^+:=\{v\in H^1_0(\Omega)\cap H^2(\Omega)\,:\, v\ge 0\mbox{ a.e. in }\Omega\}.
$$ 
Suppose that $u\in H^1_0(\Omega)\cap H^2(\Omega)$ satisfies 
\begin{equation}\label{forMP}
\int_\Omega \Delta u\Delta v\ge 0\quad\mbox{for every }v\in \mathcal C^+.
\end{equation}
Then, $u\in\mathcal C^+$. Furthermore, either $u\equiv 0$ or $u>0$ a.e. in $\Omega$.
\end{lemma}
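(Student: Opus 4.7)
My strategy is to exploit the Navier boundary conditions to decouple the fourth-order inequality \eqref{forMP} into a second-order statement, and then appeal to the classical weak and strong maximum principles for $-\Delta$. The key observation is that nonnegative test functions for \eqref{forMP} can be manufactured by solving a Poisson problem with nonnegative data: given $\phi\in L^2(\Omega)$ with $\phi\ge 0$, I would let $\psi\in H^1_0(\Omega)$ be the unique weak solution of $-\Delta\psi=\phi$ in $\Omega$. By elliptic regularity (the $C^4$-smoothness of $\partial\Omega$ is far more than needed here; $C^{1,1}$ already suffices), $\psi\in H^2(\Omega)$, and the classical weak maximum principle for $-\Delta$ gives $\psi\ge 0$ a.e.\ in $\Omega$, so $\psi\in\mathcal C^+$ is an admissible competitor.

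Plugging $v=\psi$ into \eqref{forMP} and using that $\Delta\psi=-\phi$ a.e.\ in $\Omega$, I obtain
\[
0\le \int_\Omega \Delta u\,\Delta\psi = -\int_\Omega \Delta u\,\phi = \int_\Omega (-\Delta u)\,\phi.
\]
Since $\phi\ge 0$ was arbitrary in $L^2(\Omega)$, this forces $-\Delta u\ge 0$ a.e.\ in $\Omega$. Hence $u\in H^1_0(\Omega)$ is a nonnegative weak supersolution of the Laplace equation, and the weak maximum principle for $-\Delta$ (applied for instance by testing the weak formulation against $u^-\in H^1_0(\Omega)$) delivers $u\ge 0$ a.e.\ in $\Omega$, so $u\in\mathcal C^+$.

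For the dichotomy, I would invoke the strong maximum principle: the nonnegative function $u\in H^1_0(\Omega)$ satisfies $-\Delta u\ge 0$ in the weak sense on the connected open set $\Omega$, so the weak Harnack inequality (or, equivalently, the $H^1$-version of the Hopf--Alexandrov strong maximum principle) forces $u$ to be either identically zero or strictly positive a.e.\ in $\Omega$. The only mildly delicate step is the solvability that produces the test function $\psi\in H^2\cap H^1_0$ with $-\Delta\psi=\phi$---this is precisely where the boundary regularity enters---but once that is in hand, the argument essentially translates the fourth-order Navier problem into the second-order system $-\Delta u_1=u_2$, $-\Delta u_2=\phi$, for which the classical maximum principle theory applies directly. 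I do not anticipate any substantial obstacle: the whole proof is a rerun of the standard duality trick for biharmonic Navier problems, adapted to the inequality setting provided by \eqref{forMP}.
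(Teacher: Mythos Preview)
Your proposal is correct and follows essentially the same approach as the paper: both solve $-\Delta\psi=\phi$ with nonnegative data to manufacture admissible test functions in $\mathcal C^+$, then reduce to the classical weak and strong maximum principles for $-\Delta$. The only cosmetic difference is that the paper tests against $\varphi\in C_c^\infty(\Omega)$ rather than general $\phi\in L^2(\Omega)$, but the argument and its logical structure are the same.
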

\begin{proof}
Let $\varphi\in C_\mathrm{c}^\infty(\Omega)$, $\varphi\ge0$, and let $v_\varphi\in H^1_0(\Omega)$ be a solution of 
$$-\Delta v_\varphi=\varphi\quad\mbox{in }\Omega.$$
By elliptic regularity theory, $v_\varphi\in H^1_0(\Omega)\cap H^2(\Omega)$ and by maximum principle \cite[Theorem~8.1]{GT}, $v_\varphi\ge 0$ in $\Omega$. Hence, $v_\varphi\in\mathcal C^+$. Thus, using $v_\varphi$ as test function in \eqref{forMP}, we have
$$
0\le \int_\Omega \Delta u\Delta v_\varphi=-\int_\Omega \Delta u\, \varphi.
$$
By the arbitrariness of $\varphi$, we get again by \cite[Theorem 8.1]{GT} that $u\ge 0$ a.e. in $\Omega$. Finally, suppose that for some ball $B\Subset\Omega$ 
$$\inf_B u=\inf_\Omega u\ge 0.$$
Then, by \cite[Theorem 8.19]{GT}, $u\equiv \mathrm{Const.}$, and so $u\equiv 0$ a.e. in $\Omega$, being  $u\in H^1_0(\Omega)$.
\end{proof}

Using the previous lemma, it is possible to prove the following positivity result. 

\begin{proposition}\label{pos}
Let $\Omega$ be a $C^4$-smooth bounded domain, and $(u,\rho)$ an optimal pair, then $u>0$ and $\Delta u<0$ a.e. in $\Omega$.
\end{proposition}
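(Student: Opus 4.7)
My plan is to exploit the variational characterization to "symmetrize" the sign of $u$, and then to apply the strong maximum principle twice: once to $u$ itself and once to $-\Delta u$, via the equivalent cooperative system structure hiding behind \eqref{P}.

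First I would construct a non-negative competitor. Let $w\in H^2(\Omega)\cap H^1_0(\Omega)$ be the unique solution of
\begin{equation*}
-\Delta w = |\Delta u|\quad\mbox{in }\Omega,\qquad w=0\quad\mbox{on }\partial\Omega.
\end{equation*}
By the classical maximum principle, $w\ge 0$ in $\Omega$; moreover, since $-\Delta(w\pm u)=|\Delta u|\pm \Delta u\ge 0$ and $w\pm u=0$ on $\partial\Omega$, again the maximum principle yields $w\ge |u|$ a.e. in $\Omega$. By construction $\int_\Omega(\Delta w)^2=\int_\Omega|\Delta u|^2=\int_\Omega(\Delta u)^2$, while $w\ge|u|\ge 0$ and $\rho\ge h>0$ give $\int_\Omega\rho w^2\ge\int_\Omega\rho u^2$. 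Hence $w$ is an admissible test function (with the same $\rho$) realizing a Rayleigh quotient $\le\Theta$; by minimality the quotient must equal $\Theta$, which forces $\int_\Omega\rho w^2=\int_\Omega\rho u^2$ and, combined with $w\ge|u|$ and the strict positivity of $\rho$, gives $w=|u|$ a.e.\ in $\Omega$. Thus $|u|\in H^2(\Omega)\cap H^1_0(\Omega)$, it is itself a minimizer of \eqref{Comp} (with the same $\rho$), and satisfies $-\Delta|u|=|\Delta u|\ge 0$. Replacing $u$ with $|u|$, we may therefore assume throughout the rest of the proof that $u\ge 0$ and $-\Delta u\ge 0$ in $\Omega$, with $u\not\equiv 0$.

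Now I would conclude by two applications of the strong maximum principle, in the spirit of Lemma~\ref{max_principle}. Since $-\Delta u\ge 0$ in $\Omega$, $u\ge 0$, $u=0$ on $\partial\Omega$ and $u\not\equiv 0$, the strong maximum principle of \cite[Theorem 8.19]{GT} (or directly the second part of Lemma~\ref{max_principle}) yields $u>0$ a.e.\ in $\Omega$. Next set $v:=-\Delta u$; by the Navier boundary condition, $v=0$ on $\partial\Omega$, while $v\ge 0$ in $\Omega$ by the previous step, and from the Euler–Lagrange equation $\Delta^2 u=\Theta\rho u$ we obtain
\begin{equation*}
-\Delta v = \Delta^2 u = \Theta\rho u \ge 0\quad\mbox{in }\Omega.
\end{equation*}
If $v\equiv 0$ then $\Delta u\equiv 0$, and coupled with $u=0$ on $\partial\Omega$ this would force $u\equiv 0$, a contradiction. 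Therefore the strong maximum principle applied to $v$ gives $v>0$ a.e.\ in $\Omega$, i.e.\ $\Delta u<0$ a.e.\ in $\Omega$.

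The main conceptual obstacle is the first paragraph: one has to pass from an a priori sign-indefinite minimizer to a non-negative one without losing optimality. The trick with $w$ is what makes this work, and it hinges on the strict positivity $\rho\ge h>0$ (which turns the inequality $w\ge|u|$ into an equality after using minimality). Once $u\ge 0$ and $-\Delta u\ge 0$ are in hand, everything reduces to two standard applications of the second-order strong maximum principle, which the $C^4$-regularity of $\partial\Omega$ and the regularity \eqref{reg} allow without further difficulty.
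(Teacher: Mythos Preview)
Your construction of $w$ and the two concluding applications of the strong maximum principle are correct and coincide with the paper's route (the paper defers the first part to \cite[Proposition~5.1]{CoVe}, which is exactly this $w$-trick, and treats $-\Delta u$ via the system just as you do).

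There is, however, one genuine gap. The sentence ``Replacing $u$ with $|u|$, we may therefore assume\dots'' does not prove what the proposition asserts. The claim is that the \emph{given} optimal $u$ satisfies $u>0$ (up to a global sign); you have only shown that $|u|$ is a non-negative minimizer with $-\Delta|u|=|\Delta u|$. A priori, $u$ could change sign while $|u|$ is a genuinely different minimizer, and then the conclusion ``$u>0$ a.e.'' would be false for the original $u$. The missing step is short and uses what you already have: set $v_\pm:=|u|\mp u\in H^2(\Omega)\cap H^1_0(\Omega)$. Then $v_\pm\ge 0$ in $\Omega$, $v_\pm=0$ on $\partial\Omega$, and from $-\Delta|u|=|\Delta u|$ you get
\[
-\Delta v_\pm=|\Delta u|\pm\Delta u\ge 0\quad\text{in }\Omega.
\]
By the strong maximum principle each $v_\pm$ is either identically zero or strictly positive throughout $\Omega$. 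They cannot both be strictly positive, since at every point either $u=|u|$ (so $v_+=0$) or $u=-|u|$ (so $v_-=0$). Hence $v_+\equiv 0$ or $v_-\equiv 0$, i.e.\ $u\equiv|u|$ or $u\equiv-|u|$; after a global sign change you are in the situation you claimed, and the rest of your argument applies to the original $u$. With this insertion the proof is complete and matches the paper's.
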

\begin{proof}
Proceeding as in \cite[Proposition 5.1]{CoVe}, with $B$ replaced by $\Omega$ and using Lemma \ref{max_principle}, it is possible to prove that $u>0$ in $\Omega$. Now, setting $u_1:=u$ and considering system \eqref{Sys} with $\rho=\rho_{u_1}$, we get for $u_2:=-\Delta u_1$ 
$$
\begin{cases}
-\Delta u_2=\Theta\rho u_1\quad&\mbox{in }\Omega,\\
u_2=0&\mbox{on }\partial\Omega.
\end{cases}
$$
Hence, by the strong maximum principle \cite[Theorem 8.19]{GT}, $-\Delta u_1=u_2>0$ a.e. in $\Omega$.
\end{proof}

\medskip

\medskip
\noindent{\bf Notation.} We introduce now the notation for the moving plane technique. Given $x\in \R^n$, we denote by $x_1, \dots,x_n$ its components. 
For a given unit vector $e \in \R^n$ and for $\lambda\in\R$, we define the hyperplane
$$
T_{\lambda}:= \{ x \in \R^n: \scal{e}{x}=\lambda \}.
$$
From now on, unless otherwise stated, without loss of generality, we assume 
that $e=e_1$, i.e., the normal to $T_{\lambda}$ is parallel
to the $x_1$-direction. 
For every $\lambda\in\mathbb R$, we define the reflection with respect to $T_\lambda$ as the function $\varphi_\lambda:\,\mathbb R^n \to \mathbb R^n$ such that  
\begin{equation}\label{def:phi_lambda}
\varphi_\lambda(x_1,x_2,\dots,x_n)=(2\lambda-x_1,x_2,\dots,x_n)\quad\mbox{for every } x=(x_1,x_2,\dots,x_n)\in\mathbb R^n.
\end{equation}

We consider now the domain $\Omega$ where the problem is set. 
Given any $\lambda\in\mathbb R$, we introduce the (possibly empty) set 
\begin{equation*}
\Sl:=\{x\in\Omega\, : \,x_1>\lambda\}
\end{equation*}
and its reflection with respect to
$T_{\lambda}$, 
$$
\Spl:=\{\varphi_\lambda(x)\in \mathbb R^n \,:\,x\in\Sl\}.
$$

Since $\Omega \subset \R^n$ is bounded, we have that
$T_{\lambda}$ does not touch $\overline{\Omega}$ for sufficiently big $\lambda$.
Decreasing the value of the parameter $\lambda$, we can find a value $\lambda_0$ such 
that $T_{\lambda_0}$ touches $\overline{\Omega}$:
$$
\lambda_0:=\sup\{\lambda\in \mathbb R\,:\, T_\lambda\cap\overline{\Omega}\neq\emptyset\}.
$$
If we continue to lower the value of the parameter $\lambda$, we have that the hyperplane $T_{\lambda}$
cuts off from $\Omega$ the portion $\Sl$. Clearly, at the beginning of the process, the reflection $\Spl$ of $\Sl$ will be contained in $\Omega$.
Now, we define the value $\lambda_1$ as follows
\begin{equation}\label{DefLambda1}
\lambda_1:=\sup\{\lambda<\lambda_0\,:\,\mbox{(i) or (ii) is verified}\},
\end{equation}
where
\begin{itemize}
\item[(i)] $\Spl$ is internally tangent to the boundary $\partial \Omega$ at a certain point $P \notin T_{\lambda}$;
\item[(ii)] $T_{\lambda}$ is orthogonal to the boundary $\partial \Omega$ at a certain point $Q \in T_{\lambda} \cap \partial \Omega$.
\end{itemize}
By construction, 
$$
\Sigma'_{\lambda} \subset \Omega\quad\mbox{for every }\lambda\in[\lambda_1,\lambda_0).
$$ 
Nevertheless, by further decreasing the value of $\lambda$ below $\lambda_1$,
$\Sigma'_{\lambda}$ might still be contained in $\Omega$. Therefore we define
the value  
\begin{equation}\label{DefLambda2}
\lambda_2:=\inf\{\lambda<\lambda_0\,:\, \Sigma'_{\lambda} \subset \Omega \}.
\end{equation}
For every $i=1,2$ and $\lambda\in[\lambda_2,\lambda_0)$, we can define
\begin{equation}\label{def:w_i}
w_i^{(\lambda)} := u_i \circ \varphi_\lambda - u_i \quad \textrm{in } \Sl.
\end{equation}
We observe that, under the assumptions of Theorem \ref{Main2},
$\lambda_1=\lambda_2=0$, and so the definition \eqref{def:w_i} is well-posed for every $\lambda\in[0,\lambda_0)$.
 
Now, let $x_0\in \partial\Omega$ and $\ep >0$: 
we define the following sets
\begin{equation}\label{OmegaEp}
\Omega_{\ep} := \left\{x \in \Omega \,:\, \|x- x_0\| < \ep \right\},
\end{equation}
and 
\begin{equation}\label{SEp}
S_{\ep} := \left\{x \in \partial \Omega\,: \, \|x- x_0\| < \ep \right\}.
\end{equation}
\smallskip 

We finally present an auxiliary result which will be useful in the next sections. 

\begin{lemma}\label{4cases}
Let $\lambda\in[\lambda_2,\lambda_0)$, $(u,\rho_u)$ be an optimal pair, and $u\circ\varphi_{\lambda}-u\ge 0$ in $\Sl$. Then 
$$
(\rho_u\circ\varphi_\lambda)(u\circ\varphi_\lambda)-\rho_u u\ge 0\quad \mbox{in }\Sl.
$$
\end{lemma}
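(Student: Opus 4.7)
The strategy is a pointwise case analysis driven by the two-valued structure of $\rho_u$ given in \eqref{rho}. Set $\tilde u:=u\circ\varphi_\lambda$ and $\tilde\rho:=\rho_u\circ\varphi_\lambda$. Because $\lambda\in[\lambda_2,\lambda_0)$, the reflection $\Spl=\varphi_\lambda(\Sl)$ is contained in $\Omega$, so both $\tilde u$ and $\tilde\rho$ are well defined on $\Sl$. Moreover, by Proposition~\ref{pos}, $u>0$ a.e.\ in $\Omega$, hence $\tilde u\ge u\ge 0$ a.e.\ in $\Sl$ by the standing hypothesis. The goal is to check the inequality $\tilde\rho\,\tilde u-\rho_u u\ge 0$ a.e.\ in $\Sl$, and by \eqref{rho} the pair $(\rho_u(x),\tilde\rho(x))$ takes one of the four values in $\{h,H\}\times\{h,H\}$.

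The first three cases are immediate. If $\rho_u(x)=\tilde\rho(x)$ (either both $h$ or both $H$), then $\tilde\rho\,\tilde u-\rho_u u = \rho_u(x)(\tilde u-u)\ge 0$. If $\rho_u(x)=h$ and $\tilde\rho(x)=H$, then, using $H>h>0$ and $\tilde u\ge u\ge 0$,
\[
\tilde\rho\,\tilde u-\rho_u u = H\tilde u-hu \ge H u-hu = (H-h)u\ge 0.
\]

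Only the case $\rho_u(x)=H$ and $\tilde\rho(x)=h$ remains, and the key observation is that it cannot occur (outside of a negligible set). Indeed, by \eqref{rho}, $\rho_u(x)=H$ forces $u(x)>t$, whereas $\tilde\rho(x)=\rho_u(\varphi_\lambda(x))=h$ forces $u(\varphi_\lambda(x))\le t$, i.e.\ $\tilde u(x)\le t$. Combining these with the hypothesis $\tilde u(x)\ge u(x)$ yields the chain
\[
t<u(x)\le \tilde u(x)\le t,
\]
a contradiction. Hence this case is empty (a.e.), and the pointwise inequality holds in $\Sl$.

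The only real content of the proof is the exclusion of the fourth case, and this is essentially a tautology once one reads off from \eqref{rho} the equivalences $\rho_u(x)=H\Leftrightarrow u(x)>t$ and $\tilde\rho(x)=h\Leftrightarrow \tilde u(x)\le t$. There is therefore no genuine obstacle here; the lemma is a direct consequence of the explicit structure of the optimal density $\rho_u$, which is precisely the reason this monotonicity transfer is so useful in the moving plane arguments of the next section.
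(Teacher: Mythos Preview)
Your proof is correct and follows essentially the same pointwise four-case analysis as the paper, driven by the two-valued structure \eqref{rho} of $\rho_u$; the only cosmetic difference is that you merge the two ``equal density'' cases into one line and bound the mixed case $(h,H)$ via $H\tilde u-hu\ge (H-h)u$ rather than the paper's $H\tilde u-hu\ge H(\tilde u-u)$. The exclusion of the case $(\rho_u,\tilde\rho)=(H,h)$ by the chain $t<u\le\tilde u\le t$ is exactly the paper's Case~III.
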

\begin{proof} Taking into account the explicit representation \eqref{rho} of $\rho_u$, pointwise there are four cases to treat.
\begin{itemize}
\item {\bf Case I: } $x,\varphi_\lambda(x) \in \{u \leq t\}\cap \Sl$. In this case,  
$$
\rho_{u}(\varphi_\lambda(x))\, u(\varphi_\lambda(x)) - \rho_{u}(x)\,u(x) = h \, (u(\varphi_{\lambda}(x))-u(x)) \geq 0.
$$
\item {\bf Case II: } $x \in \{u \leq t\}\cap \Sl$ and $\varphi_\lambda(x) \in \{u > t\}\cap \Sl$. In this case,  
$$
\begin{aligned}
\rho_{u}(\varphi_\lambda(x))\, u(\varphi_\lambda(x)) - \rho_{u}(x)\,u(x) &= H \, u(\varphi_\lambda(x)) - h\, u(\varphi_\lambda(x))\\
& \geq H (u(\varphi_{\lambda}(x))-u(x))\ge 0.
\end{aligned}
$$
\item {\bf Case III: } $x \in \{u > t\}\cap \Sl$ and $\varphi_\lambda(x) \in \{u \leq t\}\cap \Sl$. This case cannot occur because $u(\varphi_\lambda(x)) \geq u$ in $\Sl$.
\item {\bf Case IV: } $x,\varphi_\lambda(x) \in \{u > t\}\cap\Sl$. In this case,
$$
\rho_{u}(\varphi_\lambda(x))\, u(\varphi_\lambda(x)) - \rho_{u}(x)\,u(x) = H \, (u(\varphi_{\lambda}(x))-u(x)) \geq 0.
$$
\end{itemize}
Since these cases exhaust all possibilities, we are done.
\end{proof}


\section{Some preliminary lemmas}\label{Sec3}
We start the present section by proving some technical lemmas which are the adaptation to our setting of  
those proved by Gidas, Ni and Nirenberg in \cite{GNN} for second order problems, cf. also  Troy \cite[Lemma 4.1-4.3]{Troy81} for fourth order problems.
Note that, with respect to the results by Troy, we work with less regular solutions, i.e., the $u_i$'s satisfy the system in a weak sense. On the other hand, we can exploit the special form of the right hand side through the explicit representation of $\rho$ given in \eqref{rho}. 

\begin{lemma}\label{4.1}
Let $\partial\Omega\in C^4$,  $\lambda\in(\lambda_1,\lambda_0]$, $x_0 \in T_{\lambda}\cap\partial \Omega$, 
and $(u_1,u_2)$ be a weak solution of \eqref{Sys} with $\rho=\rho_{u_1}$.
Fix $\epsilon>0$ so small that $\nu_1(x)>0$ for every $x\in S_\epsilon$.
Then, there exists a positive constant $\delta>0$ such that 
$$
\dfrac{\partial u_i}{\partial x_1} < 0 \quad \textrm{in } \Omega_{\delta}\quad\mbox{for every }i=1,2.
$$
\end{lemma}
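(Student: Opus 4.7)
The plan is to deduce the result from the Hopf boundary-point lemma applied separately to each $u_i$, combined with the elementary geometric observation that, because $u_i$ vanishes on $\partial\Omega$, its gradient on $\partial\Omega$ is parallel to the outer normal $\nu$.

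First, I would record the regularity available. By \eqref{reg}, $u_1\in C^{3,\alpha}(\overline{\Omega})$, so that $u_2=-\Delta u_1\in C^{1,\alpha}(\overline{\Omega})$; in particular $\nabla u_1$ and $\nabla u_2$ are continuous on $\overline{\Omega}$. By Proposition~\ref{pos} (applied to the optimal pair) together with this continuity, $u_i>0$ in $\Omega$ and $u_i=0$ on $\partial\Omega$ for $i=1,2$.

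Next, I would invoke Hopf's lemma. Each $u_i$ satisfies $-\Delta u_i=f_i\ge 0$ in $\Omega$, with $f_1=u_2\ge 0$ and $f_2=\Theta\rho_{u_1} u_1\ge 0$; moreover $u_i>0$ inside $\Omega$, $u_i=0$ on $\partial\Omega$, and $\partial\Omega$ is of class $C^4$ so the interior sphere condition holds at every boundary point. Hence
$$
\frac{\partial u_i}{\partial \nu}(x)<0\qquad\text{for every }x\in\partial\Omega,\ i=1,2,
$$
and the map $x\mapsto \partial u_i/\partial \nu(x)$ is continuous on the compact set $\partial\Omega$, hence uniformly bounded away from $0$ on $\overline{S_{\epsilon/2}}$. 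Since $u_i\equiv 0$ on $\partial\Omega$ and $u_i\in C^1(\overline{\Omega})$, one has $\nabla u_i=(\partial u_i/\partial \nu)\,\nu$ on $\partial\Omega$, and therefore
$$
\frac{\partial u_i}{\partial x_1}(x)=\nu_1(x)\,\frac{\partial u_i}{\partial \nu}(x)\qquad\text{for every }x\in\partial\Omega.
$$
On $S_\epsilon$ the hypothesis $\nu_1>0$ together with Hopf's inequality forces $\partial u_i/\partial x_1<0$ on $\overline{S_{\epsilon/2}}$.

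Finally, since $\partial u_i/\partial x_1\in C(\overline{\Omega})$ and is strictly negative on the compact set $\overline{S_{\epsilon/2}}\ni x_0$, a standard uniform-continuity/compactness argument produces some $\delta\in(0,\epsilon/2)$ such that $\partial u_i/\partial x_1<0$ throughout $\Omega_\delta$, as defined in \eqref{OmegaEp}. The only mildly delicate point is verifying that Hopf's lemma may be applied to each component separately: this is legitimate here precisely because both right-hand sides $u_2$ and $\Theta\rho_{u_1} u_1$ are nonnegative in $\Omega$, a consequence of Proposition~\ref{pos}. Note that the restriction $\lambda\in(\lambda_1,\lambda_0]$ is not used explicitly in this lemma; it is inherited from the moving-plane framework of Section~\ref{Sec2} in order to guarantee that the point $x_0\in T_\lambda\cap\partial\Omega$ is meaningful for the subsequent use of the result.
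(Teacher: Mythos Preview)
Your proof is correct and takes a more direct route than the paper's. Both arguments rest on Hopf's lemma applied to each $u_i$ separately (legitimate because $-\Delta u_1=u_2\ge 0$ and $-\Delta u_2=\Theta\rho_{u_1}u_1\ge 0$), but the paper argues by contradiction: it assumes a sequence $x_k\to x_0$ in $\Omega_\epsilon$ with $\partial u_i/\partial x_1(x_k)\ge 0$, combines this with $\partial u_i/\partial x_1\le 0$ on $S_\epsilon$ to force $\partial u_i/\partial x_1(x_0)=0$, and then contradicts Hopf at $x_0$. You instead exploit the elementary fact that, since $u_i\equiv 0$ on $\partial\Omega$ and $u_i\in C^1(\overline\Omega)$, the gradient $\nabla u_i$ is purely normal on $\partial\Omega$, giving $\partial u_i/\partial x_1=\nu_1\,\partial u_i/\partial\nu<0$ on $S_\epsilon$ directly, and then extend inward by uniform continuity on a compact piece. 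Your argument is shorter and avoids the contradiction altogether; the paper's version follows the template of Gidas--Ni--Nirenberg and Troy more literally. One minor correction to your closing remark: the restriction $\lambda\in(\lambda_1,\lambda_0]$ is not quite idle---it is exactly what forces $\nu_1(x_0)>0$ (since neither critical position (i) nor (ii) of \eqref{DefLambda1} has yet occurred), and hence guarantees that an $\epsilon$ with $\nu_1>0$ on $S_\epsilon$ actually exists; the paper records this at the outset of its proof.
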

\begin{proof} We first observe that the existence of an $\epsilon$ as in the statement is ensured by the fact that $\nu_1(x_0)>0$ being $\lambda\in(\lambda_1,\lambda_0]$, and  by the regularity of $\Omega$. By \eqref{reg}, $u_i \in C^{1}(\overline{\Omega_{\ep}})$, $u_i >0$ in $\Omega_{\ep}$ and $u_i = 0 $ on $S_\epsilon$ for every $i=1,2$. 

Let $x\in S_{\ep}$. Since for every $i=1,2$, $u_i>0$ in $\Omega$ and 
$u_i=0$ on $S_{\ep}\subset \partial \Omega$,
\begin{equation}\label{partialnu}
\frac{\partial u_i}{\partial x_1}(x)=\lim_{t\to0^+}\dfrac{u_i(x)-u_i(x-t e_1)}{t}\leq 0
\end{equation}
for every $x\in S_\epsilon$ and $i=1,2$.
Arguing by contradiction, assume that there exists a sequence of points
$\{x_k\}_{k\in \N} \subset\Omega_{\ep}$ such that
\begin{equation}\label{abs}
\lim_{k\to\infty} x_k = x_0\quad\mbox{ and }\quad \frac{\partial u_i}{\partial x_1}(x_k) \geq 0\mbox{ for every }k\in\mathbb N\mbox{ and }i=1,2.
\end{equation}
Now, fix $k\in \N$ and consider the interval in the positive
$x_1$-direction having $x_k$ as left bound. This interval
must intersect $S_{\ep}$ at a certain point, denoted by $a_k$. In view of \eqref{partialnu},  $\tfrac{\partial u_i}{\partial x_1}(a_k) \leq 0$. Thus,
by \eqref{abs} and the regularity of the $u_i$'s, we have
\begin{equation}\label{dux1=0}
\dfrac{\partial u_i}{\partial x_1}(x_0) =0.
\end{equation}
Since $(u_1,u_2)$ solves weakly \eqref{Sys}, we get  
$$
-\Delta u_1 = u_2 > 0 \quad\mbox{and}\quad-\Delta u_2 = \Theta \, \rho_{u_1} \, u_1 >0\quad \textrm{in } \Omega_{\ep}.
$$
Finally, since $\partial\Omega\in C^4$, there exists an interior ball touching $\partial\Omega$ at $x_0$, hence by Hopf's Lemma (cf. \cite{PS}, \cite[Lemma 4]{Cellina}) we have $\dfrac{\partial u_i}{\partial \nu}(x_0)< 0$, and so 
$$
\dfrac{\partial u_i}{\partial x_1}(x_0) < 0,
$$
\noindent which contradicts \eqref{dux1=0} and concludes the proof.
\end{proof}

\begin{lemma}\label{4.2}
Let $\lambda \in [\lambda_1, \lambda_0)$ and $(u_1,u_2)$ be a weak solution of \eqref{Sys} with $\rho=\rho_{u_1}$. Suppose that 
\begin{equation}\label{Cond2}
u_i \leq u_i \circ \varphi_\lambda \quad \textrm{but } \quad u_i \not\equiv u_i \circ \varphi_\lambda \quad \textrm{in } \Sl\quad \mbox{ for some } i =1,2.
\end{equation}
Then,
$$
u_i<u_i \circ \varphi_\lambda \quad \textrm{in } \Sl \quad \mbox{ for every } i =1,2
$$
and 
$$\dfrac{\partial u_i}{\partial x_1} <0 \quad \textrm{on } \Omega \cap T_{\lambda}\quad \mbox{ for every } i =1,2.$$
\end{lemma}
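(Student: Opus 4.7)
The strategy is to pass to the reflected differences $w_i := u_i\circ\varphi_\lambda - u_i$ on $\Sl$ and exploit the cooperative structure of \eqref{Sys}. Since the Laplacian commutes with the reflection $\varphi_\lambda$, the pair $(w_1,w_2)$ satisfies (weakly)
\[
-\Delta w_1 = w_2, \qquad -\Delta w_2 = \Theta\bigl[(\rho_{u_1}u_1)\circ\varphi_\lambda - \rho_{u_1}u_1\bigr]\quad\text{in }\Sl.
\]
The boundary behaviour follows from Proposition~\ref{pos}, the Navier condition $u_i=0$ on $\partial\Omega$, and the inclusion $\Spl\subset\Omega$ valid for $\lambda\in[\lambda_1,\lambda_0)$: one has $w_i=0$ on $T_\lambda\cap\Omega$ and $w_i\ge 0$ on $\partial\Sl\setminus T_\lambda$. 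The regularity \eqref{reg} ensures that $w_1,w_2$ are at least $C^{1,\alpha}$ up to $T_\lambda\cap\Omega$, which is the regularity needed later for Hopf.

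Without loss of generality, assume the hypothesis is verified for $i_0=1$; the case $i_0=2$ is symmetric and follows by first deducing $w_1\ge 0$ from $-\Delta w_1=w_2\ge 0$ and the weak maximum principle. Given $w_1\ge 0$ in $\Sl$, Lemma~\ref{4cases} yields $-\Delta w_2\ge 0$ weakly, and combined with $w_2\ge 0$ on $\partial\Sl$ the weak maximum principle delivers $w_2\ge 0$ in $\Sl$. Returning to the first equation, $-\Delta w_1=w_2\ge 0$ with $w_1\ge 0$, $w_1\not\equiv 0$, so the strong maximum principle gives $w_1>0$ in $\Sl$.

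The genuinely delicate point, and the step I expect to be the main obstacle, is upgrading $w_2\ge 0$ to $w_2>0$, since a priori it could happen that $w_2\equiv 0$. Suppose for contradiction that this is the case; then the right-hand side of the second equation vanishes a.e. on $\Sl$. A pointwise inspection based on the explicit form \eqref{rho} and the already-established $w_1>0$ mirrors the four-case split in the proof of Lemma~\ref{4cases}: Cases I and IV produce $hw_1$ and $Hw_1$ respectively, strictly positive; in Case II the expression equals $Hu_1(\varphi_\lambda(x))-hu_1(x)\ge Hw_1(x)>0$ thanks to $h<H$ and $u_1\ge 0$; and Case III is already excluded. This contradicts the vanishing of the right-hand side, so $w_2\not\equiv 0$, and one more application of the strong maximum principle yields $w_2>0$ in $\Sl$.

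Finally, the normal-derivative statement is obtained from Hopf's boundary-point lemma applied to each $w_i$ on the flat portion $T_\lambda\cap\Omega$, which trivially satisfies the interior sphere condition. The outward normal to $\Sl$ there is $-e_1$, so $w_i>0$ in $\Sl$ and $w_i=0$ on $T_\lambda\cap\Omega$ imply $\partial_{x_1}w_i>0$ on $T_\lambda\cap\Omega$. Since on $T_\lambda$ one has $\partial_{x_1}(u_i\circ\varphi_\lambda)=-\partial_{x_1}u_i$, this reads $-2\partial_{x_1}u_i>0$, i.e., $\partial u_i/\partial x_1<0$ on $\Omega\cap T_\lambda$, completing the proof.
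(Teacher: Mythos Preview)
Your proof is correct and follows essentially the same route as the paper's: set up the reflected differences $w_i$, use the weak maximum principle to get $w_i\ge 0$ in $\Sl$ for both $i$, and then upgrade to the strict inequality and the Hopf conclusion on $T_\lambda\cap\Omega$.

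The only real difference is in how the upgrade is done. The paper invokes a strong maximum principle for \emph{cooperative systems} (Damascelli--Pacella, \cite[Theorem~2.2]{DP}) once, which simultaneously yields $w_1>0$, $w_2>0$ and the boundary derivative sign. You instead argue with the scalar strong maximum principle applied twice, and to close the loop you need the extra step excluding $w_2\equiv 0$ by revisiting the four-case analysis with the strict inequality $w_1>0$. Your argument for this step is fine (each case indeed gives a strictly positive contribution once $w_1>0$), so nothing is missing; but note that the system viewpoint makes this ``delicate point'' disappear entirely, since the cooperative structure $-\Delta w_1=w_2$, $-\Delta w_2\ge \Theta c(x)w_1$ with $c\ge h>0$ already forces both components to be strictly positive as soon as one of them is not identically zero. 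What your approach buys is that it is fully self-contained, relying only on the scalar maximum/Hopf principles; what the paper's approach buys is brevity.
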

\begin{proof} Fix $\lambda\in[\lambda_1,\lambda_0)$. Since $(u_1,u_2)$ solves weakly \eqref{Sys} we get
$$-\Delta (u_1 \circ \varphi_\lambda) = u_2 \circ \varphi_\lambda \quad \textrm{in }  \Sl,$$
\noindent and
$$-\Delta (u_2 \circ \varphi_\lambda) = \Theta \;(\rho_{u_1}\circ\varphi_\lambda) \;(u_1 \circ \varphi_\lambda) \quad \textrm{in } \Sl,$$
where we have used the invariance of $-\Delta$ with respect to the reflection $\varphi_\lambda$.
Now, by \eqref{def:w_i} and \eqref{Sys}, we have 
\begin{equation}\label{w_1}
-\Delta w_1^{(\lambda)} = u_2 \circ \varphi_\lambda - u_2 = w_2^{(\lambda)} \quad\textrm{in }\Sl,
\end{equation}
\noindent and 
\begin{equation}\label{w_2}
-\Delta w_2^{(\lambda)} = \Theta [ (\rho_{u_1}\circ\varphi_\lambda)\; (u_1 \circ \varphi_\lambda) -  \rho_{u_1}u_1]\quad\textrm{in }\Sl.
\end{equation}
If \eqref{Cond2} holds with $i=2$, by \eqref{w_1} we get $-\Delta w_1^{(\lambda)} \ge 0$ in $\Sl$. Furthermore,  
$$
w_1^{(\lambda)}=\begin{cases}
0 \quad&\mbox{on }T_\lambda,\\
u_1\circ\varphi_\lambda&\mbox{on }\partial\Sl\cap \partial\Omega,
\end{cases}
$$
and so $w_1^{(\lambda)}\ge 0 $ on $\partial\Sl$ being $\varphi_\lambda(\partial\Sl\cap \partial\Omega)\subset\overline{\Omega}$ and $u_1\ge 0$ in $\overline{\Omega}$. Hence, by the weak maximum principle \cite[Corollary 3.2]{GT},
we get that $w_1^{(\lambda)} \ge 0$ in $\Sl$. If \eqref{Cond2} holds with $i=1$, $w_1^{(\lambda)}\ge 0$ in $\Sl$, and by Lemma \ref{4cases}, we know that the right-hand side of \eqref{w_2} is non-negative. Hence, $-\Delta w_2^{(\lambda)} \geq 0$ in $\Sl$, and arguing as in the previous case, we get $w_2^{(\lambda)}\ge 0$ by the weak maximum principle. Therefore, in both cases \eqref{Cond2} holding with $i=1$ and $i=2$, we obtain 
$$
w_i^{(\lambda)} \ge 0 \quad\mbox{in }\Sl\quad\mbox{for every }i=1,2.
$$
Thus, by the strong maximum principle \cite[Theorem 2.2]{DP}, we get that for every $i=1,2$ 
$$
w_i^{(\lambda)}>0 \quad\mbox{ in }\Sl
$$ 
and 
$$
0>\frac{\partial w_i^{(\lambda)}}{\partial \nu}=-\frac{\partial w_i^{(\lambda)}}{\partial x_1}=2\frac{\partial u_i}{\partial x_1}\quad	\mbox{on } \Omega \cap T_{\lambda},
$$ 
where we have used the fact that  $w_i^{(\lambda)} = 0$ on $\Omega \cap T_{\lambda}$. 
This concludes the proof.
\end{proof}

\begin{remark}\label{symmetryOmega}
Under the assumptions of Lemma \ref{4.2}, if there exist $i=1,2$ and a point
$P \in \Omega \cap T_{\lambda_1}$ for which
$$\dfrac{\partial u_i}{\partial x_1}(P)=0,$$
then, {\it for every} $i=1,2$, $u_i$ is symmetric with respect to the hyperplane $T_{\lambda_1}$,
and 
$$
\Omega = \Sigma_{\lambda_1} \cup \Sigma'_{\lambda_1} \cup (\Omega \cap T_{\lambda_1}).
$$
Indeed, being $u_i\le u_i\circ\varphi_{\lambda_1}$ in $\Sigma_{\lambda_1}$, as a direct consequence of Lemma \ref{4.2}, we get 
$$
u_i \equiv u_i\circ \varphi_{\lambda_1}\quad \mbox{in }\Sigma_{\lambda_1}\quad\mbox{for every }i=1,2.
$$
This means that both $u_1$ and $u_2$ are symmetric with respect to the hyperplane $T_{\lambda_1}$.    
Now, since $u_i >0$ in $\Sigma_{\lambda_1}$ and $u_i=0$ on $\partial \Omega$,
it must be
$$
\Omega = \Sigma_{\lambda_1} \cup \Sigma'_{\lambda_1} \cup (\Omega \cap T_{\lambda_1}).
$$
Otherwise, it would exist a point $Q\in(\Omega\cap\partial\Sigma'_{\lambda_1})\setminus\left( \Sigma_{\lambda_1} \cup \Sigma'_{\lambda_1} \cup (\Omega \cap T_{\lambda_1})\right)$ where $u_i(Q)>0$, being $Q\in\Omega$, and simultaneously $u_i(Q)=u_i(\varphi_{\lambda_1}(Q))=0$, being $\varphi_{\lambda_1}(Q)\in\partial\Sigma_{\lambda_1}\setminus T_{\lambda_1}$. This is impossible and concludes the proof.
\end{remark}

\begin{lemma}\label{4.3}
Let $\partial \Omega\in C^4$, $\lambda \in (\lambda_1,\lambda_0)$, and $(u_1,u_2)$ be a weak solution of \eqref{Sys} with $\rho=\rho_{u_1}$.
Then
\begin{equation*}
\dfrac{\partial u_i}{\partial x_1} <0 \quad \textrm{and}\quad u_i < u_i \circ \varphi_\lambda \quad	\mbox{in } \Sl\quad\mbox{for every }i=1,2.
\end{equation*}
\end{lemma}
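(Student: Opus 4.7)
The plan is to implement the classical moving plane continuation argument tailored to our system. Introduce
$$
\lambda^{\ast}:=\inf\bigl\{\mu\in(\lambda_1,\lambda_0)\,:\,u_i<u_i\circ\varphi_{\lambda}\text{ in }\Sl\text{ for every }\lambda\in[\mu,\lambda_0)\text{ and }i=1,2\bigr\},
$$
and aim to prove $\lambda^{\ast}=\lambda_1$. Once this is done, the second conclusion $\partial u_i/\partial x_1<0$ in $\Sl$ follows at once from Lemma \ref{4.2}: any $x\in\Sl$ lies on $T_{x_1}\cap\Omega$ with $x_1\in(\lambda_1,\lambda_0)$, and since the strict inequality $u_i<u_i\circ\varphi_{x_1}$ in $\Sigma_{x_1}$ is \eqref{Cond2}, Lemma \ref{4.2} yields $\partial u_i/\partial x_1(x)<0$.

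The starting step is short. Pick the finitely many points $x_0\in T_{\lambda_0}\cap\partial\Omega$; Lemma \ref{4.1} produces neighborhoods in which $\partial u_i/\partial x_1<0$ for $i=1,2$. For $\lambda$ slightly smaller than $\lambda_0$, the cap $\Sl$ is contained in the union of these neighborhoods, so integrating the strict sign of $\partial u_i/\partial x_1$ along horizontal segments from $x\in\Sl$ to $\varphi_\lambda(x)$ gives $u_i(x)<u_i(\varphi_\lambda(x))$. Hence the set defining $\lambda^{\ast}$ is non-empty, and by continuity of the $u_i$'s, $u_i\le u_i\circ\varphi_{\lambda^{\ast}}$ in $\Sigma_{\lambda^{\ast}}$ for $i=1,2$.

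Assume for contradiction that $\lambda^{\ast}>\lambda_1$. First rule out equality $u_i\equiv u_i\circ\varphi_{\lambda^{\ast}}$: since $\lambda^{\ast}>\lambda_1$, neither (i) nor (ii) holds, hence $\varphi_{\lambda^{\ast}}(\partial\Sigma_{\lambda^{\ast}}\setminus T_{\lambda^{\ast}})$ is contained in the open set $\Omega$; the identity would force $u_i=0$ on this interior portion, contradicting Proposition \ref{pos}. Thus \eqref{Cond2} holds for at least one index, and Lemma \ref{4.2} upgrades this to $u_i<u_i\circ\varphi_{\lambda^{\ast}}$ strictly in $\Sigma_{\lambda^{\ast}}$ and $\partial u_i/\partial x_1<0$ on $\Omega\cap T_{\lambda^{\ast}}$ for both $i$.

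The core step is to push $\lambda^{\ast}$ further down, and here is where I expect the main obstacle. For each $x_0\in T_{\lambda^{\ast}}\cap\partial\Omega$, Lemma \ref{4.1} furnishes a neighborhood $\Omega_{\delta(x_0)}$ on which $\partial u_i/\partial x_1<0$; let $U$ be a finite union of such neighborhoods covering $T_{\lambda^{\ast}}\cap\partial\Omega$. Choose a compact set $K\subset\Sigma_{\lambda^{\ast}}\cup(\Omega\cap T_{\lambda^{\ast}})$ such that $\overline{\Sigma_{\lambda^{\ast}}}\setminus K\subset U$. On $K$, the quantity $w_i^{(\lambda^{\ast})}=u_i\circ\varphi_{\lambda^{\ast}}-u_i$ is continuous and positive, with $\partial w_i^{(\lambda^{\ast})}/\partial x_1<0$ on $K\cap T_{\lambda^{\ast}}$ by Lemma \ref{4.2}; a standard compactness/Taylor argument gives $\delta_0>0$ such that $w_i^{(\lambda)}>0$ on $K\cap\overline{\Sigma_\lambda}$ for every $\lambda\in(\lambda^{\ast}-\delta_0,\lambda^{\ast}]$. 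On $\overline{\Sigma_\lambda}\setminus K\subset U$, the sign of $\partial u_i/\partial x_1$ provided by Lemma \ref{4.1} yields $u_i(x)<u_i(\varphi_\lambda(x))$ by one-dimensional integration in $x_1$, provided $\delta_0$ is small enough that the horizontal segment from $x$ to $\varphi_\lambda(x)$ remains in $U$. Combining both estimates contradicts the definition of $\lambda^{\ast}$, so $\lambda^{\ast}=\lambda_1$, completing the proof.
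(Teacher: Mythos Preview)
Your overall strategy is correct and matches the paper's: start the plane near $\lambda_0$ via Lemma~\ref{4.1}, define a critical level, and derive a contradiction if it stops strictly above $\lambda_1$. The starting step, the exclusion of $u_i\equiv u_i\circ\varphi_{\lambda^\ast}$, and the deduction of $\partial u_i/\partial x_1<0$ from Lemma~\ref{4.2} at the end are all fine. The continuation step, however, has genuine gaps.

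First, a compact set $K$ as you describe cannot exist. Since $K\subset\Sigma_{\lambda^\ast}\cup(\Omega\cap T_{\lambda^\ast})\subset\Omega$, the complement $\overline{\Sigma_{\lambda^\ast}}\setminus K$ contains the whole boundary portion $\partial\Omega\cap\{x_1\ge\lambda^\ast\}$, while your $U$ is only a neighborhood of the lower-dimensional set $T_{\lambda^\ast}\cap\partial\Omega$; thus $\overline{\Sigma_{\lambda^\ast}}\setminus K\subset U$ is impossible. Second, even granting a suitable $K$, for $\lambda<\lambda^\ast$ the set $\overline{\Sigma_\lambda}\setminus K$ also contains the interior slab $\Omega\cap\{\lambda\le x_1<\lambda^\ast\}$, which is not covered by $U$. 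Third, the integration argument ``provided the horizontal segment from $x$ to $\varphi_\lambda(x)$ remains in $U$'' fails for points $x\in\overline{\Sigma_{\lambda^\ast}}\setminus K$ with $x_1$ well above $\lambda^\ast$: the reflected point $\varphi_\lambda(x)$ then lies deep inside $\Omega$ and the segment leaves any boundary neighborhood.

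The paper sidesteps these covering problems by including the derivative condition in the definition of the critical level $\mu$ (cf.\ \eqref{DefMu}) and then arguing by sequences rather than by decomposition: minimality of $\mu$ produces either a sequence with $\partial u_i/\partial x_1\ge0$ accumulating on $T_\mu$, or a sequence where the reflection inequality fails; the second case is reduced to the first via the Mean Value Theorem, and the first is ruled out at the single limit point using Lemma~\ref{4.2} if it lies in $\Omega$ and Lemma~\ref{4.1} if it lies on $\partial\Omega$. This localization to one point of $\overline\Omega\cap T_\mu$ is exactly what your covering argument is missing.
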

\begin{proof}
By Lemma \ref{4.1} there exists $\delta>0$ for which 
$$
\dfrac{\partial u_i}{\partial x_1}<0  \quad \textrm{in } \Omega_\delta.
$$
Hence, there exists $\bar \lambda\in(\lambda_1,\lambda_0)$ sufficiently close to $\lambda_0$ so that for both $i=1,2$
\begin{equation}\label{goal1}
\dfrac{\partial u_i}{\partial x_1} <0 \quad\mbox{in } \Sigma_{\lambda}
\end{equation}
for every $\lambda\in[\bar \lambda,\lambda_0)$.
In particular, there exists $\lambda\in(\bar\lambda,\lambda_0)$ close enough to $\lambda_0$ for which $\Sigma'_{\lambda}\subset \Sigma_{\bar\lambda}$. Therefore, $\dfrac{\partial u_i}{\partial x_1} <0$ in $\Sigma'_{\lambda}\cup\Sl \cup (T_{\lambda}\cap \Omega)$ for every $i=1,2$. Being the $u_i$'s of class at least $C^1(\Omega)$, the Mean Value Theorem guarantees that for every $x\in\Sl$ there exists $\xi_x\in \{t\varphi_{\lambda}(x)+(1-t) x\;:\; t\in(0,1)\}\subset\Sigma'_{\lambda}\cup\Sl \cup (T_{\lambda}\cap \Omega)$ such that 
$$
0>\frac{\partial u_i}{\partial x_1}(\xi_x)=\frac{u_i(x)-u_i(\varphi_{\lambda}(x))}{(x-\varphi_{\lambda}(x))_1}.
$$ 
Hence, 
\begin{equation}\label{goal}
u_i < u_i\circ\varphi_{\lambda} \quad \textrm{in } \Sl\quad\mbox{for every }i=1,2
\end{equation}
and for $\lambda$ sufficiently close to $\lambda_0$. 
We need to gain some more space where \eqref{goal1} and \eqref{goal} hold. To this aim, we lower $\lambda$ until
we reach the threshold $\mu \geq \lambda_1$, which is defined as follows:
\begin{equation}\label{DefMu}
\mu := \inf \left\{\lambda \in (\lambda_1,\lambda_0): \; \frac{\partial u_i}{\partial x_1}<0\mbox{ and }u_i < u_i\circ\varphi_{\lambda}\textrm{ hold in } \Sl \mbox{ for every }i=1,2
\right\}.
\end{equation} 
Thus, for $\lambda = \mu$ we have by continuity
\begin{equation}\label{At_Mu}
\frac{\partial u_i}{\partial x_1}<0 \; \mbox{ and }\;	u_i \leq u_i\circ\varphi_\mu \quad \textrm{in } \Sigma_\mu \quad\mbox{for every }i=1,2.
\end{equation}
There are now two cases to treat. Either $\mu > \lambda_1$ or $\mu = \lambda_1$.
Let us start with the case $\mu > \lambda_1$. We claim that, in this case
\begin{equation}\label{At_Mu2}
u_i\not\equiv u_i\circ\varphi_\mu\quad\mbox{in }\Sigma_\mu \quad\mbox{for every }i=1,2.
\end{equation}
Indeed, if $u_i\equiv u_i\circ\varphi_\mu$ in $\Sigma_\mu$ for some $i=1,2$, then by the continuity of $u_i$ up to the boundary of $\Omega$, we would have $u_i(x)=u_i(\varphi_\mu(x))$ for every $x\in\partial\Sigma_\mu\setminus T_\mu$. On the other hand, since $\mu>\lambda_1$, $\varphi_\mu(\partial\Sigma_\mu\setminus T_\mu)\subset \Omega$. Hence , by Propositions \ref{pos} and \ref{equivalence}
$$
u_i(x)=0<u_i(\varphi_\mu(x))\quad\mbox{for every }x\in \partial\Sigma_\mu\setminus T_\mu, 
$$
which is absurd and proves the claim. 
Therefore, in view of conditions \eqref{At_Mu} and \eqref{At_Mu2}, we can apply Lemma \ref{4.2} to get  
\begin{equation}\label{fromLe2}
u_i<u_i\circ\varphi_\mu\mbox{ in }\Sigma_\mu \quad\mbox{and}\quad \frac{\partial u_i}{\partial x_1}<0\mbox{ in }\Omega\cap T_\mu\quad\mbox{for every }i=1,2. 
\end{equation}
By the definition \eqref{DefMu} of $\mu$, we know that at least one of the following two cases holds:
\begin{itemize}
\item[(a)] there exist $\epsilon>0$ and a sequence of points $(x_k)\subset\Sigma_{\mu-\epsilon}\setminus\Sigma_\mu$ converging to some point $x\in T_\mu$ such that 
\begin{equation}\label{Opposite1}
\frac{\partial u_i}{\partial x_1}(x_k) \geq 0\quad	\mbox{for some }i=1,2\mbox{ and for all }k\in\mathbb N;
\end{equation}
\item[(b)] there exist $\epsilon>0$ and an increasing sequence of numbers $(\lambda_k)_{k \in \mathbb{N}} \subset (\mu-\epsilon, \mu)$ converging to $\mu$, such that for every $k \in \mathbb{N}$, there is a point $x_k \in \Sigma_{\lambda_k}\setminus\Sigma_\mu$ for which
\begin{equation}\label{Opposite}
 u_i(x_k) \geq u_i(\varphi_{\lambda_k}(x_k))\quad	\mbox{for some }i=1,2\mbox{ and for all }k\in\mathbb N.
\end{equation}
\end{itemize}

If (a) holds, clearly $\frac{\partial u_i}{\partial x_1}(x)\ge 0$ for the same index $i$ as in \eqref{Opposite1}. Hence, by \eqref{fromLe2}, $x\in\partial\Omega$. Since $\mu >\lambda_1$, $T_\mu$ is not orthogonal to $\partial\Omega$ at $x$ and so $\nu_1(x)>0$. In view of Lemma~\ref{4.1}, this implies that $\frac{\partial u_i}{\partial x_1}< 0$ in a neighborhood of $x$, a contradiction. \\
\indent If (b) holds, then, up to a subsequence, $x_k\to x\in \overline{\Sigma_\mu}$. Hence, $\varphi_{\lambda_k}(x_k)\to \varphi_\mu (x)$. Consequently, by the continuity of $u_i$, 
$$
u_i(x)\ge u_i(\varphi_\mu(x))\quad\mbox{ for the same index $i$ as in \eqref{Opposite}}
$$
which, in view of \eqref{fromLe2}, gives $x\in\partial\Sigma_\mu$. If $x\in\partial\Sigma_\mu\setminus T_\mu$, then $\varphi_\mu(x)\in\Omega$, being $\mu>\lambda_1$. This forces 
$$
0=u_i(x)<u_i(\varphi_\mu(x)),
$$
which is absurd. 
Thus, it must be $x\in\partial\Sigma_\mu\cap T_\mu$. By the $C^1$-regularity of $u_i$ and the Mean Value Theorem, for $k\in\mathbb N$ sufficiently large there exists $$\xi_k\in \{t\varphi_{\lambda_k}(x_k)+(1-t) x_k\;:\; t\in(0,1)\}\subset\Sigma_{\mu-\epsilon}\setminus\Sigma_\mu$$ for which 
$$
\frac{\partial u_i}{\partial x_1}(\xi_k)=\frac{u_i(\varphi_{\lambda_k}(x_k))-u_i(x_k)}{(\varphi_{\lambda_k}(x_k)-x_k)_1}\ge 0,
$$
where in the last inequality we have used \eqref{Opposite} and the fact that $x_k\in\Sigma_{\lambda_k}$.
In this way, we have built a sequence $(\xi_k)$ verifying (a), but we have already proved that this is not possible. 
This excludes the possibility that $\mu>\lambda_1$. 

The only remaining possibility is that $\mu = \lambda_1$. Hence, for every $\lambda\in (\lambda_1,\lambda_0)$ the condition \eqref{goal} holds.
Reasoning as above, we obtain 
$$
u_i\not \equiv u_i\circ\varphi_\lambda\quad\mbox{in }\Sigma_\lambda\quad\mbox{for every }i=1,2,
$$
for every $\lambda\in(\lambda_1,\lambda_0)$. So, in particular, by Lemma \ref{4.2}, $\tfrac{\partial u_i}{\partial x_1}<0$ on $\Omega\cap T_\lambda$, which by the arbitrariness of $\lambda\in (\lambda_1,\lambda_0)$ gives that $\frac{\partial u_i}{\partial x_1}<0$ in the whole $\Sigma_{\lambda_1}$ for every $i=1,2$ and closes the proof.
\end{proof}

\section{Proofs of Theorems \ref{Main1} and \ref{Main2}}\label{Sec4}
In order to prove the rigidity result stated in Theorem \ref{Main1}, we take inspiration essentially from
\cite{PhRa} by Philippin and Ragoub, and \cite[Theorem 2]{Troy81} by Troy.

\begin{proof}[$\bullet$ Proof of Theorem \ref{Main1}] We start fixing an arbitrary direction, say $e_1$, along which we will move a hyperplane from infinity towards $\Omega$. By the discussion in the Introduction, we know that $(u_1,u_2):=(u,-\Delta u)$ is a weak solution of \eqref{Sys} with $\rho=\rho_{u_1}$. 
We consider the functions $w_i^{(\lambda_1)}$ defined in \eqref{def:w_i}, namely
$$
w_i^{(\lambda_1)} = u_i \circ \varphi_{\lambda_1} - u_i \quad \textrm{in } \Sigma_{\lambda_1}\quad\mbox{for every }i=1,2.
$$
By Lemma \ref{4.3}, we know that 
$$
w_{i}^{(\lambda_1)} \geq 0 \quad \textrm{in } \Sigma_{\lambda_1}\quad\mbox{for every }i=1,2.
$$
Hence, by Lemma \ref{4.2}, either  
\begin{equation}\label{i}
w_{i}^{(\lambda_1)} >0 \quad \textrm{in } \Slo\quad\mbox{for every }i=1,2
\end{equation}
or
\begin{equation}\label{ii}
w_i^{(\lambda_1)}\equiv 0 \quad\mbox{in }\Sigma_{\lambda_1}\quad\mbox{for every } i=1,2.
\end{equation}
Now, we observe that if the latter holds, the proof of the theorem is completed. Indeed, \eqref{ii} implies in particular that $u=u_1$ and $-\Delta u=u_2$ are symmetric with respect to the
hyperplane $T_{\lambda_1}$. Furthermore, being $u>0$ in $\Omega$ by \cite[Proposition 5.1]{CoVe},
and $u=0$ on $\partial\Omega$, it must be (cf. Remark \ref{symmetryOmega})
$$
\Omega=\Sigma_{\lambda_1}\cup\Sigma'_{\lambda_1}\cup\left(\Omega\cap T_{\lambda_1}\right),
$$
that is $\Omega$ is symmetric with respect to $T_{\lambda_1}$ as well. By the arbitrariness of the direction fixed, we infer that $\Omega$ must be a ball and $u$ radially symmetric. The monotonicity property of the solution $u$ then follows from \cite[Theorem 1.5]{CoVe}. 
It remains to exclude the case \eqref{i}. 

To this aim, suppose first that we are in case (i) of \eqref{DefLambda1},
namely that $\Sigma'_{\lambda_1}$ is internally tangent to $\partial \Omega$
at a point $P \notin T_{\lambda_1}$. It is clear that, $P':=\varphi_{\lambda_1}(P)\in\partial\Sigma_{\lambda_1}\setminus T_{\lambda_1}$, hence $P,\,P'\in\partial\Omega$, and so
$$
u_{1}(P) = u_1(P') = 0.
$$
Therefore, arguing as in the proof of Lemma \ref{4.2}, taking into account \eqref{i} and the fact that $w_1^{(\lambda_1)}\in C^2(\Sigma_{\lambda_1})\cap C(\overline{\Sigma_{\lambda_1}})$, we deduce that $w_1^{(\lambda_1)}$ satisfies the following problem
\begin{equation*}
\begin{cases}
&-\Delta w_1^{(\lambda_1)}=w_2^{(\lambda_1)}>0 \quad \mbox{ in }\Sigma_{\lambda_1},\\
&w_1^{(\lambda_1)}>0\mbox{ in }\Sigma_{\lambda_1}, \; w_1^{(\lambda_1)}(P')=0.
\end{cases}
\end{equation*}
Thus, by the Hopf Lemma \cite[Lemma 4]{Cellina}, we obtain
\begin{equation*}
\dfrac{\partial w_{1}^{(\lambda_1)}}{\partial \nu}(P')<0.
\end{equation*}
On the other hand, by straightforward calculations we have 
$$
\displaystyle{\frac{\partial w_1^{(\lambda_1)}}{\partial \nu}}(P')=0. 
$$
Indeed, by \eqref{Over} 
$$
\frac{\partial w_1^{(\lambda_1)}}{\partial \nu}(P')=
\frac{\partial (u\circ\varphi_{\lambda_1})}{\partial \nu}(P') - \frac{\partial u}{\partial \nu}(P')= \frac{\partial u}{\partial \nu}(P)-\frac{\partial u}{\partial \nu}(P')=c-c=0.
$$
This excludes case (i).\\
Let us now assume that we are in case (ii) of \eqref{DefLambda1}.
We want to show that the function $w_{1}^{(\lambda_1)}$ has a zero of
order two at the point $Q \in \partial \Omega \cap T_{\lambda_1}$, namely
that both the first and the second derivatives of $w_{1}^{(\lambda_1)}$
vanish at $Q$. This is achieved performing the same computations
as in \cite[pp. 307-308]{Serrin}. We stress that we can replicate that argument 
thanks to the $C^{3,\alpha}(\overline{\Omega})$-regularity of $u_1$, cf. \eqref{reg}.
Actually, that argument is local in nature, so it is enough to note that 
in our situation we are even more smooth in a neighborhood of the boundary.
We then reach a contradiction using the {\it corner lemma} of Serrin \cite[Lemma 1]{Serrin}. 
This excludes the case \eqref{i} and closes the proof.
\end{proof}

\begin{remark}
The techniques used to prove Theorem \ref{Main1} actually allow to 
slightly relax the assumptions. Following \cite[Theorem 1]{Praja},
we can prescribe the normal derivative $\tfrac{\partial u}{\partial \nu}$
to be constant on $\partial \Omega$ except on one point where
either an interior or exterior sphere condition holds.
Since the proof of \cite[Theorem 1]{Praja} is mainly based only on 
geometric considerations on the set $\Omega$, that argument can be applied to our case as well.
\end{remark}

\medskip

\begin{proof}[$\bullet$ Proof of Theorem \ref{Main2}]
As usual, being $(u,\rho)$ an optimal pair, under any of the two conditions (i) or (ii) in the statement, we can consider the couple $(u_1,u_2):=(u,-\Delta u)$ as a weak solution of \eqref{Sys} with $\rho=\rho_{u_1}$.
We start observing that, in order to prove the theorem, it is enough to show that 
\begin{equation}\label{tohold}
\mbox{for every }\lambda\in(0,\lambda_0)\quad\mbox{ it holds }\quad u_i<u_i\circ\varphi_\lambda\mbox{ in } \Sl\mbox{ for every }i=1,2.
\end{equation}
and 
\begin{equation}\label{tohold1}
\mbox{for every }-\lambda\in(-\lambda_0,0)\quad\mbox{ it holds }\quad u_i<u_i\circ\varphi_{-\lambda}\mbox{ in } \varphi_0(\Sigma_{\lambda})\mbox{ for every }i=1,2.
\end{equation}

Indeed, by \eqref{tohold} and by the definition \eqref{def:phi_lambda} of $\varphi_\lambda$, for every $\lambda\in(0,\lambda_0)$, if we take $x=(x_1,y)\in\Sl$, i.e., $x\in\Omega$ with $x_1>\lambda$, we get  
$$
u_i(x_1,y)<u_i(2\lambda-x_1,y)\quad\mbox{for every }i=1,2.
$$
Hence, $(2\lambda-x_1,y)\to (-x_1,y)$ as $\lambda\searrow 0$, and by continuity
$$
u_i(x_1,y)\le u_i(-x_1,y)\quad\mbox{for every }i=1,2.
$$

On the other hand, for every $-\lambda\in(-\lambda_0,0)$, if we take $x=(-x_1,y)\in\varphi_0(\Sl)$, i.e., $x\in\Omega$, with $-x_1<-\lambda$, we get by \eqref{tohold1}
$$
u_i(-x_1,y)<u_i(-2\lambda+x_1,y)\quad\mbox{for every }i=1,2.
$$
So, $(-2\lambda+x_1,y)\to (x_1,y)$ as $-\lambda\nearrow 0$, and by continuity
$$
u_i(-x_1,y)\le u_i(x_1,y)\quad\mbox{for every }i=1,2.
$$
Altogether, for any $x=(x_1,y)\in\Omega$
$$
u_i(x_1,y)= u_i(-x_1,y)\quad\mbox{for every }i=1,2,
$$
that is $u=u_1$ and $-\Delta u=u_2$ are symmetric with respect to $\{x_1=0\}$. 
Moreover, if for every $\lambda\in(0,\lambda_0)$ we define $w_i^{(\lambda)}$ as in \eqref{def:w_i}, by \eqref{tohold} we know that $w_i^{(\lambda)}>0$ in $\Sigma_\lambda$ for every $i=1,2$. Therefore,  for every $\lambda\in(0,\lambda_0)$ it holds
\begin{equation*}
\begin{cases}
\begin{aligned}
-\Delta w_1^{(\lambda)}=w_2^{(\lambda)}>0 & \quad \mbox{in } \Sigma_{\lambda},\\
w_1^{(\lambda)}>0 & \quad \mbox{in }\Sigma_\lambda,\\
 w_1^{(\lambda)}= 0 &\quad \mbox{on } T_\lambda.
\end{aligned}
\end{cases}
\end{equation*}
Then, by Hopf's Lemma (cf. \cite[Lemma 4]{Cellina}) we obtain 
$$
0>\frac{\partial w_1^{(\lambda)}}{\partial \nu}=-\frac{\partial w_1^{(\lambda)}}{\partial x_1}=2\frac{\partial u_1}{\partial x_1} \quad\mbox{on } T_\lambda,
$$
where $\nu$ denotes the unit outward normal to $\partial\Sigma_\lambda$. This means that  $\dfrac{\partial u}{\partial x_1}<0$ for $x_1>0$, as required. The convexity of the set $\{u>t\}$ with respect to the hyperplane $\{x_1=0\}$ is then a consequence of the same property holding for the domain $\Omega$ and of the monotonicity of $u$. 

By the assumptions on $\Omega$, $\lambda_1=0$. Hence, the validity of \eqref{tohold} is guaranteed by Lemma~\ref{4.3}. The case of \eqref{tohold1} is analogous, due to the symmetry of the problem. 
\end{proof}


\section{Convex domains}\label{Sec5}
In this section we deal with convex sets $\Omega$.
In particular, this implies that $\partial\Omega$ is Lipschitz continuous, cf. \cite[Corollary 1.2.2.3]{Grisvard}.\\
As already mentioned in the Introduction, setting $u_1:=u$, the equivalence between problem \eqref{P} and system \eqref{Sys} is a key tool for our purposes. Nevertheless, such equivalence is not straightforward when the domain is not smooth enough (i.e., $C^4$). 
When dealing with a fourth order problem
of the form
\begin{equation}\label{PDE4}
\left\{ \begin{array}{rl}
             \Delta^2 u = f & \textrm{in } \Omega,\\
						 u=\Delta u = 0 & \textrm{on } \partial \Omega,
				\end{array}\right.		
\end{equation}
beyond the usual weak solutions, we can define the so-called {\it system solutions}, see e.g. \cite[Section 2.7]{GGS}. We recall below both definitions.  
\begin{definition}
Let $f \in L^{2}(\Omega)$, we say that $u$ is a {\it weak solution} 
of \eqref{PDE4}, if $u \in H^{2}(\Omega)\cap H^{1}_{0}(\Omega)$ and
$$\int_{\Omega}\Delta u \, \Delta \varphi \, dx = \int_{\Omega} f \varphi \, dx \quad \textrm{for every } \varphi \in H^{2}(\Omega)\cap H^{1}_{0}(\Omega).$$ 
\end{definition}
\begin{definition}
Let $f \in L^{2}(\Omega)$, we say that $u$ is a {\it system solution} 
of \eqref{PDE4}, if $u \in H^{1}_{0}(\Omega)$, $\Delta u \in H^{1}_{0}(\Omega)$
and $u$ solves (weakly) the system
\begin{equation}\label{Pf}
\left\{ \begin{array}{rl}
              -\Delta u = v & \textrm{in } \Omega,\\
						  -\Delta v = f & \textrm{in } \Omega,\\
							u=v=0 & \textrm{on } \partial \Omega.
				\end{array}\right.							
\end{equation}
\end{definition}
These two types of solutions are in general different, as it is shown by the
Sapondzyan paradox or the Babuska paradox, see \cite[Chapter 2, Section 7]{GGS}.
One common feature of these two paradoxes, is the fact that the set $\Omega \subset \R^{n}$
on which they are considered, admit concave corners. 
On the other hand, if one assumes $\Omega$ to be convex, these kinds of phenomena disappear, and one can prove that system solutions coincide
with weak solutions. 
\begin{proposition}[Corollary 1.6 of \cite{NaSw}]
Let $\Omega$ be convex and $f \in L^2(\Omega)$. Then $u$ is a weak solution of \eqref{Pf} if and only if $u$ is a system solution of the same problem.
\end{proposition}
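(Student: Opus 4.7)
The key ingredient I would use is the classical regularity theorem for the Dirichlet Laplacian on convex domains: if $\Omega$ is convex and $g\in L^2(\Omega)$, then the unique weak solution $\varphi\in H^1_0(\Omega)$ of $-\Delta\varphi=g$ automatically belongs to $H^2(\Omega)$, cf.\ \cite{Grisvard}. In particular, the operator $-\Delta:H^2(\Omega)\cap H^1_0(\Omega)\to L^2(\Omega)$ is a linear bijection, and this is the engine that should transfer information between the two formulations.

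For the implication \emph{system $\Rightarrow$ weak}, I would start with $u\in H^1_0(\Omega)$ and $v:=-\Delta u\in H^1_0(\Omega)$ solving \eqref{Pf} weakly. Applying the above convex regularity to $-\Delta u=v\in L^2(\Omega)$ immediately upgrades $u$ to $H^2(\Omega)\cap H^1_0(\Omega)$. For every test function $\varphi\in H^2(\Omega)\cap H^1_0(\Omega)$ I would then integrate by parts twice (legitimate since $u$, $v$, and $\varphi$ all lie in $H^2\cap H^1_0$) to obtain
$$
\int_\Omega \Delta u\,\Delta\varphi\,dx=\int_\Omega \nabla v\cdot\nabla\varphi\,dx=\int_\Omega f\varphi\,dx,
$$
where the last equality uses the weak equation $-\Delta v=f$ tested against $\varphi\in H^1_0(\Omega)$.

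For the converse \emph{weak $\Rightarrow$ system}, I would let $\tilde v\in H^1_0(\Omega)$ denote the unique weak solution of $-\Delta \tilde v=f$, which by the same convex regularity lies in $H^2(\Omega)\cap H^1_0(\Omega)$, and then argue that $-\Delta u=\tilde v$ almost everywhere. The strategy is: for an arbitrary $g\in L^2(\Omega)$, let $\varphi_g\in H^2(\Omega)\cap H^1_0(\Omega)$ solve $-\Delta\varphi_g=g$ (this is where surjectivity of $-\Delta$ is crucially used), plug $\varphi_g$ into the weak formulation of \eqref{PDE4}, and compare the resulting identity with the one obtained by testing $-\Delta\tilde v=f$ against $\varphi_g$. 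A routine integration by parts on each side should yield $\int_\Omega(-\Delta u-\tilde v)g\,dx=0$ for every $g\in L^2(\Omega)$, forcing $-\Delta u=\tilde v$ a.e.\ and, in particular, $-\Delta u\in H^1_0(\Omega)$.

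The hard part will be exactly this second direction: a naive integration by parts only gives $-\Delta u\in L^2(\Omega)$, not the vanishing of its boundary trace in the $H^1_0$ sense. Convexity enters precisely to guarantee the surjectivity of $-\Delta:H^2(\Omega)\cap H^1_0(\Omega)\to L^2(\Omega)$, which supplies a rich enough class of test functions $\varphi_g$ to identify $-\Delta u$ with the $H^1_0$-Dirichlet solution of $-\Delta v=f$. This is also the place where the Sapondzyan/Babuska paradoxes would invalidate the equivalence on general (non-convex) domains, so I would not expect any shortcut avoiding the use of convex-domain $H^2$-regularity.
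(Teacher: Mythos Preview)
The paper does not supply a proof of this proposition: it is quoted verbatim as Corollary~1.6 of \cite{NaSw} and used as a black box. So there is no ``paper's own proof'' to compare against; the only thing the paper does is record, immediately afterwards (Proposition~\ref{H2}, citing \cite{Kadlec} and \cite{NaSw}), the $H^2$-regularity on convex domains that you correctly identify as the engine.

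Your argument is sound. Both implications are handled cleanly: the system $\Rightarrow$ weak direction is a direct regularity upgrade plus one integration by parts, and for the weak $\Rightarrow$ system direction your device of testing against $\varphi_g$ with $-\Delta\varphi_g=g$ for arbitrary $g\in L^2(\Omega)$ is exactly the right way to force $-\Delta u$ to coincide with the $H^1_0$-solution $\tilde v$ of $-\Delta\tilde v=f$. The crucial point---that convexity gives surjectivity of $-\Delta:H^2\cap H^1_0\to L^2$, hence a large enough supply of test functions---is precisely the content of Proposition~\ref{H2}, and you invoke it in the right place. Your closing remark about where the Sapondzyan/Babu\v{s}ka paradoxes would break the argument is also accurate and matches the discussion in Section~\ref{Sec5}.
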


Furthermore, the following result holds. 
\begin{proposition}[\cite{Kadlec} and Theorem 1.2 of \cite{NaSw}]\label{H2}
Let $\Omega$ be convex. Then for every $f \in L^2(\Omega)$ the (unique) weak solution $u\in H^1_0(\Omega)$ of 
$$
\begin{cases}
-\Delta u = f\quad&\mbox{in }\Omega,\\
u=0&\mbox{on }\partial\Omega
\end{cases}
$$ 
belongs to $H^2(\Omega)$.
\end{proposition}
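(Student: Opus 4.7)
The plan is to obtain the bound $\|D^2 u\|_{L^2(\Omega)}\le \|f\|_{L^2(\Omega)}$ first on smooth convex domains via Kadlec's integral identity, and then extend it to an arbitrary convex $\Omega$ by inner approximation. Note that the Dirichlet problem on a bounded Lipschitz domain already has a unique weak solution $u\in H^1_0(\Omega)$ by Lax--Milgram, so the task is purely one of upgrading the regularity of that specific $u$.

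For the smooth convex case, assume $\partial\Omega$ is $C^2$ and take $u\in C^\infty(\overline\Omega)\cap H^1_0(\Omega)$. Writing
\begin{equation*}
(\Delta u)^2-|D^2 u|^2=\sum_{i,j}\bigl[\partial_{ii}u\,\partial_{jj}u-(\partial_{ij}u)^2\bigr]
\end{equation*}
and integrating each summand by parts twice (once with respect to $x_i$, once with respect to $x_j$), the bulk contributions cancel in the sum and only boundary terms remain. Using the tangential decomposition $\Delta u=\partial_{\nu\nu}u+H\,\partial_\nu u+\Delta_T u$ on $\partial\Omega$, together with the fact that $u=0$ on $\partial\Omega$ forces $\Delta_T u=0$ and $\nabla u=(\partial u/\partial\nu)\,\nu$ there, the boundary integral collapses to
\begin{equation*}
\int_\Omega (\Delta u)^2\,dx-\int_\Omega |D^2 u|^2\,dx=\int_{\partial\Omega} H\,\left(\tfrac{\partial u}{\partial\nu}\right)^2 dS,
\end{equation*}
where $H$ is the mean curvature of $\partial\Omega$. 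Convexity of $\Omega$ gives $H\ge 0$, hence $\|D^2 u\|_{L^2(\Omega)}^2\le \|f\|_{L^2(\Omega)}^2$. By density the inequality extends to all of $H^2(\Omega)\cap H^1_0(\Omega)$, and in particular to the weak solution of $-\Delta u=f$, which already lies in $H^2(\Omega)$ on smooth domains by classical elliptic regularity.

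For the approximation step, I would take an exhaustion $\Omega_k\Subset \Omega$ with $\Omega_k\nearrow\Omega$ and each $\Omega_k$ smooth and (strictly) convex; such a sequence exists for every bounded convex open set. Let $u_k\in H^1_0(\Omega_k)\cap H^2(\Omega_k)$ solve $-\Delta u_k=f|_{\Omega_k}$, and extend $u_k$ by zero to $\Omega$. The previous step yields the \emph{universal} bound $\|u_k\|_{H^2(\Omega)}\le C\|f\|_{L^2(\Omega)}$ with $C$ independent of $k$; a subsequence converges weakly in $H^2(\Omega)$ to some $u^*\in H^2(\Omega)\cap H^1_0(\Omega)$, which is identified with the weak solution $u$ by passing to the limit in the variational formulation against test functions $\varphi\in C_c^\infty(\Omega)$.

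The main obstacle is the identity in the first step: the bookkeeping of the double integration by parts must be executed carefully enough to reduce the boundary term to a quantity whose sign is controlled solely by the mean curvature of $\partial\Omega$. Equally critical is that the resulting constant is universal, depending neither on $\Omega$ nor on the smoothness of $\partial\Omega$; otherwise the inner approximation would not give a uniform bound and the limiting procedure would degenerate. Indeed, the whole reason this strategy works only for convex domains (in contrast, say, to Lipschitz domains with reentrant corners) is the favorable sign of $H$, which is exactly what convexity supplies.
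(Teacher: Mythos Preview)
The paper does not prove this proposition at all: it is stated as a citation from Kadlec and from Nazarov--Sweers, with no argument given. Your sketch is precisely the classical Kadlec approach those citations point to, so in spirit you are doing exactly what the paper invokes.

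One genuine technical slip in the approximation step: you write ``extend $u_k$ by zero to $\Omega$'' and then assert the uniform bound $\|u_k\|_{H^2(\Omega)}\le C\|f\|_{L^2(\Omega)}$. Zero extension across $\partial\Omega_k$ does not preserve $H^2$; the distributional second derivatives of the extension pick up a surface contribution on $\partial\Omega_k$ proportional to $\partial u_k/\partial\nu$, which is generically nonzero. The bound you actually have is $\|D^2 u_k\|_{L^2(\Omega_k)}\le \|f\|_{L^2(\Omega)}$. The correct way to pass to the limit is to use the zero extension only at the $H^1_0$ level (to identify the weak $H^1_0(\Omega)$ limit with $u$), and to obtain $H^2$ regularity of $u$ by working locally: for each fixed $\Omega_j$ the restrictions $u_k|_{\Omega_j}$ (for $k\ge j$) are uniformly bounded in $H^2(\Omega_j)$, so the limit lies in $H^2(\Omega_j)$ with $\|D^2 u\|_{L^2(\Omega_j)}\le \|f\|_{L^2(\Omega)}$, and letting $j\to\infty$ gives $u\in H^2(\Omega)$ by monotone convergence. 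With this repair the argument is sound.
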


Altogether, we can summarize the previous considerations as follows.
\begin{proposition}\label{equivalence}
Let $\Omega$ be a convex bounded domain, then the following two sentences are equivalent
\begin{itemize}
\item[(i)] $(u,\rho)$ is an optimal pair of \eqref{Comp};
\item[(ii)] $(u_1,u_2):=(u,-\Delta u)$ is a weak solution of \eqref{Sys} with $\rho=\rho_{u_1}$ as in \eqref{rho}.
\end{itemize}
\end{proposition}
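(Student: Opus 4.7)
\emph{Plan.} The proof amounts to translating between weak solutions of the fourth-order Euler--Lagrange equation \eqref{P} and weak solutions of the cooperative second-order system \eqref{Sys}, the bridge being the convex-domain equivalence between ``weak'' and ``system'' solutions of \eqref{PDE4} recalled just above (Corollary 1.6 of \cite{NaSw}), together with Kadlec's $H^2$-regularity stated in Proposition \ref{H2}.

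\emph{Direction $(i)\Rightarrow(ii)$.} Starting from an optimal pair $(u,\rho)$, I first invoke \cite[Theorems 1.3 and 1.4]{CoVe} to know that $\rho=\rho_u$ has the bang-bang form \eqref{rho}. The standard first variation of the Rayleigh quotient at fixed $\rho$ then gives that $u\in H^2(\Omega)\cap H^1_0(\Omega)$ is a weak solution of \eqref{P}, i.e., of \eqref{PDE4} with $f:=\Theta\rho_u u\in L^2(\Omega)$. Applying Corollary 1.6 of \cite{NaSw} (the non-trivial direction, which requires convexity of $\Omega$) yields $\Delta u\in H^1_0(\Omega)$, and setting $u_1:=u$, $u_2:=-\Delta u$, the pair $(u_1,u_2)$ weakly solves \eqref{Sys} with $\rho=\rho_{u_1}$.

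\emph{Direction $(ii)\Rightarrow(i)$.} Conversely, if $(u_1,u_2)=(u,-\Delta u)$ is a weak solution of \eqref{Sys} with $\rho=\rho_{u_1}$, then by definition $u=u_1\in H^1_0(\Omega)$ and $-\Delta u=u_2\in H^1_0(\Omega)\subset L^2(\Omega)$. Proposition \ref{H2} then gives $u\in H^2(\Omega)$, so $u$ is an admissible competitor in \eqref{Comp}. The reverse implication of Corollary 1.6 of \cite{NaSw} upgrades the system solution to a weak solution of \eqref{P}. Testing this weak equation against $u$ itself produces $\int_\Omega(\Delta u)^2=\Theta\int_\Omega\rho_{u_1}u^2$, i.e., the Rayleigh quotient of $(u,\rho_{u_1})$ equals the double infimum $\Theta=\Theta(h,H,M)$; since $\rho_{u_1}$ is bang-bang with the prescribed mass and hence admissible, $(u,\rho_{u_1})$ is an optimal pair.

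\emph{Main obstacle.} The only delicate point is that on a merely Lipschitz convex domain the equivalence between weak and system formulations of a biharmonic problem cannot be taken for granted --- on sets with concave corners it actually fails, as illustrated by the classical Babuska/Sapondzyan paradoxes. Convexity of $\Omega$ rules out these pathologies precisely via the two ingredients collected just before the statement, namely Corollary 1.6 of \cite{NaSw} and Kadlec's $H^2$-regularity, after which the proof reduces to bookkeeping.
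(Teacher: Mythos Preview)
Your proof is correct and follows exactly the approach the paper intends: the proposition is stated in the paper as a direct summary (``Altogether, we can summarize the previous considerations as follows'') of the Nazarov--Sweers equivalence and Kadlec's $H^2$-regularity, and you have simply written out the details of both implications using those two ingredients. The only point worth tightening is in $(ii)\Rightarrow(i)$: you should make explicit that the $\Theta$ appearing in \eqref{Sys} is by definition the value $\Theta(h,H,M)$ from \eqref{Comp}, and that the ``suitable $t$'' in \eqref{rho} is precisely the one enforcing $\int_\Omega\rho_{u_1}=M$, so that $\rho_{u_1}\in\mathrm{P}$; with this in hand your Rayleigh-quotient computation indeed shows $(u,\rho_{u_1})$ attains the double infimum.
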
 
\smallskip 

\begin{remark}\label{Rmk5}
We want to stress that, thanks to Proposition \ref{H2}, Lemma \ref{max_principle} holds even for convex domains. 
Therefore this allows to follow verbatim the proof of Proposition \ref{pos}, showing the positivity of $u$
when $\Omega$ is a convex set without any further regularity assumption. Namely, the following statement holds true: 

\quote{\it Let $\Omega$ be a convex bounded domain and $(u,\rho)$ an optimal pair, then $u>0$ and $\Delta u<0$ a.e. in $\Omega$.}
\end{remark}
\smallskip

We recall below a particular instance of the maximum principle for cooperative systems in small domains
proved for a more general situation by de Figueiredo in \cite{deFigueiredo}.

\begin{lemma}[Proposition 1.1 of \cite{deFigueiredo}]\label{MP_coop_sys}
Let $\Omega\subset\mathbb R^n$ be a bounded domain,
$$
\mathcal L:=\left(
\begin{array}{ll}
\Delta & 0\\
0&\Delta
\end{array}
\right)\quad \mbox{and}\quad
A(x):=\left(
\begin{array}{ll}
a_{11}(x) &a_{12}(x)\\
a_{21}(x) &a_{22}(x)
\end{array}
\right),
$$
with $a_{ij}\in L^\infty(\Omega)$ for $i,j=1,2$ and $a_{ij}\ge 0 $ for $i\neq j$. Suppose that $U:=(u_1,u_2)$ satisfies
$$
\begin{cases}
&-(\mathcal L+A)U\ge0\quad\mbox{in }\Omega,\\
&\displaystyle{\liminf_{x\to\partial\Omega}}\,U(x)\ge0.
\end{cases}
$$
Then, there exists $\delta=\delta(n,\mathrm{diam}(\Omega))>0$ such that if $|\Omega|<\delta$, $U\ge 0$ in $\Omega$ (i.e, $u_1\ge 0$ and $u_2\ge0$ in $\Omega$). 
\end{lemma}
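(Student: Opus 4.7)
The plan is to prove this maximum principle in small domains for cooperative elliptic systems by an energy method: test each scalar inequality against the negative part of the corresponding component, use the cooperative sign $a_{12}, a_{21} \geq 0$ to control the cross term, and close the estimate via Poincar\'e's inequality together with the smallness of $|\Omega|$.

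Write $u_i^- := \max(-u_i,0)$, so that proving $U \geq 0$ amounts to showing $u_1^- \equiv u_2^- \equiv 0$. The liminf boundary condition ensures $u_i^- \in H^1_0(\Omega)$. The hypothesis $-(\mathcal L + A) U \geq 0$ reads componentwise as $-\Delta u_i \geq a_{ii} u_i + a_{ij} u_j$ for $j \neq i$. Testing against $u_i^- \geq 0$ and integrating by parts yields
\[
\int_\Omega |\nabla u_i^-|^2 \;\leq\; \int_\Omega a_{ii}(u_i^-)^2 \;-\; \int_\Omega a_{ij}\, u_j\, u_i^-.
\]
Splitting $u_j = u_j^+ - u_j^-$ and using $a_{ij} \geq 0$, the contribution $-\int a_{ij} u_j^+ u_i^- \leq 0$ is favorable and can be discarded, whereas the remaining term $\int a_{ij} u_j^- u_i^-$ is dominated by $\tfrac{M}{2}(\|u_i^-\|_{L^2}^2 + \|u_j^-\|_{L^2}^2)$ via Young's inequality, with $M := \max_{ij} \|a_{ij}\|_{L^\infty}$. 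Summing over $i = 1,2$ gives
\[
\sum_{i=1}^{2} \|\nabla u_i^-\|_{L^2(\Omega)}^2 \;\leq\; C_1(\|a_{ij}\|_{L^\infty}) \sum_{i=1}^{2} \|u_i^-\|_{L^2(\Omega)}^2.
\]

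Applying the scalar Poincar\'e inequality $\|v\|_{L^2(\Omega)} \leq C_P(n)\, |\Omega|^{1/n}\, \|\nabla v\|_{L^2(\Omega)}$ on $H^1_0(\Omega)$ to each $u_i^-$ converts this into
\[
\sum_{i=1}^{2} \|\nabla u_i^-\|_{L^2(\Omega)}^2 \;\leq\; C_1 \cdot C_P(n)^2 \cdot |\Omega|^{2/n} \sum_{i=1}^{2}\|\nabla u_i^-\|_{L^2(\Omega)}^2.
\]
Choosing $\delta > 0$ so small that the prefactor $C_1 \cdot C_P(n)^2 \cdot \delta^{2/n} < 1$, for any $\Omega$ with $|\Omega| < \delta$ one is forced to $\nabla u_i^- \equiv 0$, and hence to $u_i^- \equiv 0$ in $\Omega$ (again by Poincar\'e, since $u_i^- \in H^1_0(\Omega)$). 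This gives exactly $U \geq 0$ in $\Omega$.

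The only delicate point, and the place where the cooperative structure is indispensable, is the control of the cross-term $\int a_{ij} u_j u_i^-$: the sign $a_{ij} \geq 0$ is what makes the undesired piece $\int a_{ij} u_j^+ u_i^-$ have the right sign to be discarded, leaving an inequality that only involves the negative parts of both components and therefore closes upon summation. The dependence of the threshold $\delta$ on $n$ and $\mathrm{diam}(\Omega)$ (as well as implicitly on the $L^\infty$-norms of the $a_{ij}$) stated in the lemma enters through the Poincar\'e constant and the absorption of the diagonal term $\int a_{ii}(u_i^-)^2$.
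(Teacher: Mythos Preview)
The paper does not supply a proof of this lemma; it is quoted verbatim as Proposition~1.1 of de~Figueiredo and used as a black box. Your energy argument---testing each scalar inequality against $u_i^-$, exploiting the cooperative sign $a_{ij}\ge 0$ to discard the mixed term $-\int a_{ij}u_j^+u_i^-$, and closing via Poincar\'e and smallness of $|\Omega|$---is the standard route to such small-domain maximum principles and is correct.

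One small remark on the constants: your argument produces a threshold $\delta$ that depends on $n$ and on $\max_{i,j}\|a_{ij}\|_{L^\infty(\Omega)}$ (through the absorption constant $C_1$), rather than on $\mathrm{diam}(\Omega)$ as the lemma is stated. You flag this yourself in the final paragraph. This is not a defect of your proof; the dependence on the coefficient bounds is genuinely unavoidable, and the Poincar\'e inequality you use (with constant $C_P(n)|\Omega|^{1/n}$, via Faber--Krahn) is in fact sharper than any version involving $\mathrm{diam}(\Omega)$. So your formulation of the smallness condition is the more natural one, and it suffices for every application made in the paper.
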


\smallskip
We must now spend a few words regarding the regularity of $u$. 
We recall that if $\Omega$ is $C^4$,
by \cite[Theorem 1.3]{CoVe} we know that if $(u,\rho)$ is an optimal pair, 
$u\in C^{3,\alpha}(\overline\Omega)\cap W^{4,q}(\Omega)$ for every $\alpha\in(0,1)$ and $q\ge1$.
Since here $\Omega$ is required to be just convex, only Lipschitz regularity of the boundary is guaranteed. In particular, $\Omega$ satisfies an
exterior cone condition and optimal interior regularity can be deduced as in \cite[Theorem~1.3(a)]{CoVe}, using a bootstrap argument and embedding theorems. We get in this way 
\begin{equation*}
u \in  C^{3,\alpha}(\Omega)\cap W^{4,q}_{\mathrm{loc}}(\Omega)\quad\textrm{for all } \alpha \in (0,1) \textrm{ and } q \geq 1,
\end{equation*}
cf. \cite[Remark 3.2]{CoVe}.

We are interested also in the regularity {\it up to the boundary} of optimal
pairs in the case of convex domains. In this regard, we can prove the following lemma. 

\begin{lemma}	
Let $\Omega$ be convex and $(u,\rho)$ be an optimal pair. Then, both $u$ and $\Delta u$ are of class $C^{0,\gamma}(\overline\Omega)$ for some $\gamma\in(0,1)$.
\end{lemma}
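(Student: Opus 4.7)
The plan is to reduce the fourth order problem to the second order cooperative system via Proposition \ref{equivalence}, and then bootstrap the regularity using only the exterior cone condition (which is automatic for convex domains). Setting $u_1:=u$ and $u_2:=-\Delta u$, Proposition \ref{equivalence} yields that $(u_1,u_2)$ is a weak solution of \eqref{Sys} with $\rho=\rho_{u_1}$; in particular $u_1,u_2\in H^1_0(\Omega)$, by Proposition \ref{H2} $u_1\in H^2(\Omega)$, and $\rho_{u_1}\in L^\infty(\Omega)$ with values in $[h,H]$.

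The first step is to show $u_1,u_2\in L^\infty(\Omega)$. Starting from $u_1\in H^2(\Omega)\cap H^1_0(\Omega)$, by Sobolev embedding $u_1\in L^{q_0}(\Omega)$ for some $q_0>2$ (any $q_0\in[1,\infty)$ when $n\le 4$). Then $-\Delta u_2=\Theta\rho_{u_1}u_1\in L^{q_0}(\Omega)$ with $u_2\in H^1_0(\Omega)$. Applying Stampacchia's classical bootstrap for the Dirichlet Laplacian on bounded domains (alternating the Sobolev embedding while $q_k\le n/2$ and the $L^p$--$L^\infty$ estimate once $q_k>n/2$) one reaches $u_2\in L^\infty(\Omega)$ after finitely many steps. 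Repeating the same bootstrap with $-\Delta u_1=u_2$ then gives $u_1\in L^\infty(\Omega)$ as well.

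Once the right-hand sides $u_2$ and $\Theta\rho_{u_1}u_1$ are both in $L^\infty(\Omega)$, I would view each of the equations
\[
-\Delta u_1=u_2\quad\text{in }\Omega,\qquad -\Delta u_2=\Theta\rho_{u_1}u_1\quad\text{in }\Omega,
\]
with zero Dirichlet data, separately. Because $\Omega$ is convex it satisfies a uniform exterior cone condition at every boundary point, and therefore De Giorgi--Nash--Moser combined with a standard barrier construction at the boundary yields global Hölder continuity of the weak solution of $-\Delta v=f$, $v=0$ on $\partial\Omega$, whenever $f\in L^\infty(\Omega)$. Applying this to both equations produces exponents $\gamma_1,\gamma_2\in(0,1)$ with $u_1\in C^{0,\gamma_1}(\overline{\Omega})$ and $u_2\in C^{0,\gamma_2}(\overline{\Omega})$; setting $\gamma:=\min\{\gamma_1,\gamma_2\}$ concludes the proof, since $u=u_1$ and $\Delta u=-u_2$.

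The main obstacle is the last step: one must appeal to a global Hölder estimate up to the boundary in a merely Lipschitz (convex) setting, where the classical Schauder theory does not apply directly. Convexity is precisely what compensates the lack of $C^4$-regularity of $\partial\Omega$, by guaranteeing the uniform exterior cone condition that feeds the barrier argument.
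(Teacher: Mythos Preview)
Your argument is correct, and the final step via the uniform exterior cone condition is exactly what the paper does as well (it cites \cite[Theorem~8.29]{GT}, which is precisely the De Giorgi--Nash--Moser boundary estimate under an exterior cone condition).

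The genuine difference is in how you reach $u_1,u_2\in L^\infty(\Omega)$. You run a Stampacchia/Sobolev bootstrap, alternating between the two equations of \eqref{Sys} until the right-hand sides lie in $L^q$ with $q>n/2$. The paper instead exploits the positivity of $u_1,u_2$ (Remark~\ref{Rmk5}) together with the cooperative structure: setting $\varphi:=u_1+u_2$ and using $-\Delta u_1\le Ku_2$, $-\Delta u_2\le Ku_1$ for a suitable $K>1$, one gets $-\Delta\varphi\le K\varphi$ with $\varphi=0$ on $\partial\Omega$, and then \cite[Theorem~8.15]{GT} yields $\varphi\in L^\infty(\Omega)$ in a single step; since $0<u_i\le\varphi$ this gives $u_1,u_2\in L^\infty(\Omega)$ immediately. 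Your route is more robust (it does not use the sign of $u_i$), while the paper's is shorter and dimension-independent, avoiding any iteration.
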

\begin{proof} We set as usual $u_1:= u$ and we consider the system \eqref{Sys} with $\rho=\rho_{u_1}$, i.e.,
\begin{equation*}
\left\{ \begin{array}{rl}
           -\Delta u_1 = u_2 & \textrm{in } \Omega,\\
					 -\Delta u_2 =\Theta \rho u_1 & \textrm{in } \Omega,\\
					 u_1= u_2=0 & \textrm{on } \partial \Omega.
				\end{array}\right.
\end{equation*}
Since $\Theta \rho u_1 \in L^{2}(\Omega)$, by Proposition \ref{H2}
we know that $u_1,\,u_2 \in H^{2}(\Omega)\cap H^{1}_{0}(\Omega)$.
Moreover, by Remark \ref{Rmk5} and Proposition \ref{equivalence}, we also know that $u_1, u_2 >0$ a.e. in $\Omega$. 
Now, without loss of generality, we can assume that $\Theta \rho \leq K$ for
some positive constant $K>1$ and by \eqref{Sys}
we get
\begin{equation}\label{IneqSys}
\left\{ \begin{array}{rl}
           -\Delta u_1 < K u_2 & \textrm{in } \Omega,\\
					 -\Delta u_2 \leq K u_1 & \textrm{in } \Omega,\\
					 u_1= u_2=0 & \textrm{on } \partial \Omega.
				\end{array}\right.
\end{equation} 	
Let us now consider the function $\varphi:= u_1 + u_2$. 
By \eqref{IneqSys}, we obtain				
\begin{equation*}
\left\{ \begin{array}{rl}
           -\Delta \varphi \leq K \varphi & \textrm{in } \Omega,\\
					 \varphi=0 & \textrm{on } \partial \Omega.
				\end{array}\right.
\end{equation*} 
Clearly, $\varphi \in H^{2}(\Omega)\cap H^{1}_{0}(\Omega)$, being $u_1$ and $u_2$ in the same space. Hence,
by \cite[Theorem~8.15]{GT} with $L = \Delta +K$, we get that
$$\sup_{\Omega} \varphi \leq C \|\varphi\|_{L^{2}(\Omega)}$$
for some $C=C(n,K,|\Omega|)>0$ independent of $\varphi$.
Therefore, $\varphi \in L^{\infty}(\Omega)$,
and since both $u_1 \leq \varphi$ and $u_2 \leq \varphi$, this gives
$$
u_1, u_2 \in L^{\infty}(\Omega).
$$
In particular, $u_1,u_2 \in L^{q}(\Omega)$, with $q > n/2$, and we can now exploit \cite[Theorem 8.29]{GT} to get that
\begin{equation*}
u_1, u_2 \in C^{0,\gamma}(\overline{\Omega}) \quad \textrm{for some } \gamma \in (0,1)
\end{equation*}
and conclude the proof.
\end{proof}

We are now ready to prove our result for convex domains. If $(u,\rho)$ is an optimal pair, by the explicit form \eqref{rho} of $\rho=\rho_u$, we know that $\rho_u$ inherits all symmetry properties of $u$. The following proposition states that, under suitable convexity assumptions, also the converse is true, namely also $u$ inherits the symmetries of $\rho_u$. The argument used in the proof of the next proposition is in the spirit of the paper \cite{BN} by  Berestycki and Nirenberg, where a second order problem was treated, by using a careful combination of the maximum principle for small domains, the strong maximum principle in its classical form, and the moving plane technique. In the following proposition we use again the notation related to the moving plane technique, introduced in Section \ref{Sec2}.
 
\begin{proposition}\label{prop:convex}
Let $\Omega$ be a convex domain of $\mathbb R^n$ and suppose that $\Omega$ is symmetric with respect to the hyperplane $\{x_1=0\}$. 
Let $(u,\rho)$ be an optimal pair such that the superlevel set $\{u>t\}$ is symmetric and convex with respect to  $\{x_1=0\}$. \\
Then, $u$ is symmetric with respect to $\{x_1=0\}$ and strictly decreasing in $x_1$ for $x_1 > 0$. 
\end{proposition}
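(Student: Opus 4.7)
The plan is to carry out the moving plane procedure in the non-smooth setting following the strategy of Berestycki--Nirenberg \cite{BN}, replacing Hopf's Lemma on $\partial\Omega$ (unavailable here due to the mere Lipschitz regularity) with the small-domain maximum principle for cooperative systems stated in Lemma \ref{MP_coop_sys}. The key structural input provided by the hypothesis that $\{u>t\}$ is symmetric and convex with respect to $\{x_1=0\}$ is the pointwise inequality
\[
\rho_u(\varphi_\lambda(x))\ge\rho_u(x)\quad\text{for every }\lambda\in[0,\lambda_0)\text{ and every }x\in\Sl.
\]
Indeed, if $\rho_u(x)=H$ then $x\in\{u>t\}$; by symmetry $\varphi_0(x)\in\{u>t\}$, and by convexity in the $x_1$-direction the segment from $\varphi_0(x)$ to $x$ is contained in $\{u>t\}$; since $\varphi_\lambda(x)$ lies on this segment whenever $0\le\lambda\le x_1$, we get $\varphi_\lambda(x)\in\{u>t\}$, hence $\rho_u(\varphi_\lambda(x))=H$. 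Setting $(u_1,u_2):=(u,-\Delta u)$ (a weak solution of \eqref{Sys} with $\rho=\rho_{u_1}$ by Proposition~\ref{equivalence}) and defining $w_i^{(\lambda)}$ as in \eqref{def:w_i}, the inequality above allows to rewrite \eqref{w_1}--\eqref{w_2} as
\[
-\Delta w_1^{(\lambda)}-w_2^{(\lambda)}=0,\qquad -\Delta w_2^{(\lambda)}-\Theta(\rho_u\circ\varphi_\lambda)w_1^{(\lambda)}=\Theta(\rho_u\circ\varphi_\lambda-\rho_u)u_1\ge 0,
\]
that is, as a cooperative second order system with non-negative forcing, of the type covered by Lemma~\ref{MP_coop_sys}.

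By symmetry and convexity of $\Omega$, one has $\Sigma'_\lambda\subset\Omega$ for every $\lambda\in[0,\lambda_0)$, and hence $w_i^{(\lambda)}\ge 0$ on $\partial\Sl$ (zero on $T_\lambda\cap\overline\Omega$ and equal to $u_i\circ\varphi_\lambda\ge 0$ on $\partial\Omega\cap\{x_1>\lambda\}$). Thus for $\lambda$ sufficiently close to $\lambda_0$, $|\Sl|$ is smaller than the $\delta$ of Lemma~\ref{MP_coop_sys}, which gives $w_i^{(\lambda)}\ge 0$ in $\Sl$ for both $i=1,2$. This starts the moving plane, and we set
\[
\mu:=\inf\left\{\lambda\in(0,\lambda_0)\,:\,w_i^{(\lambda')}\ge 0\text{ in }\Sigma_{\lambda'}\text{ for every }\lambda'\in[\lambda,\lambda_0)\text{ and every }i=1,2\right\}.
\]
The goal is to prove that $\mu=0$.

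Suppose by contradiction $\mu>0$. By continuity $w_i^{(\mu)}\ge 0$ in $\Sigma_\mu$; applying the strong maximum principle separately to each equation (using $-\Delta w_i^{(\mu)}\ge 0$), for each $i$ we have either $w_i^{(\mu)}>0$ in $\Sigma_\mu$ or $w_i^{(\mu)}\equiv 0$. The identical vanishing is ruled out for $\mu>0$: as $\Omega$ is bounded and symmetric with respect to $\{x_1=0\}$, it cannot be also symmetric with respect to $T_\mu$ (two parallel hyperplanes of symmetry would force $\Omega$ to be invariant under a non-trivial translation), so there exists $x_0\in\partial\Omega\cap\{x_1>\mu\}$ with $\varphi_\mu(x_0)\in\Omega$, at which $u_i(x_0)=0<u_i(\varphi_\mu(x_0))$ by Remark~\ref{Rmk5}, contradicting $w_i^{(\mu)}\equiv 0$. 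Therefore $w_i^{(\mu)}>0$ strictly in $\Sigma_\mu$. To contradict the minimality of $\mu$, following the Berestycki--Nirenberg continuation argument, fix $\eta>0$ and a closed set $K\subset\Sigma_\mu$ with $|\Sigma_\mu\setminus K|<\eta$ and $\min_K w_i^{(\mu)}>0$; by uniform continuity of the $u_i$'s, $w_i^{(\lambda)}>0$ on $K$ for $\lambda<\mu$ close enough to $\mu$, while $|\Sl\setminus K|\le\eta+|\Omega\cap\{\lambda<x_1\le\mu\}|$ can be made smaller than the $\delta$ of Lemma~\ref{MP_coop_sys} by choosing $\eta$ small and $\lambda$ close to $\mu$. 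On $\Sl\setminus K$ the functions $w_i^{(\lambda)}$ satisfy the cooperative system above and are non-negative on the boundary (positive on $\partial K$, non-negative on $\partial\Sl$), so Lemma~\ref{MP_coop_sys} yields $w_i^{(\lambda)}\ge 0$ on $\Sl\setminus K$, and therefore on all of $\Sl$. This contradicts the definition of $\mu$ and proves that $\mu=0$.

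The proof concludes as in Theorem~\ref{Main2}: passing to the limit $\lambda\to 0^+$ in the inequality $u_i\le u_i\circ\varphi_\lambda$ on $\Sl$ and repeating the argument with hyperplanes moved from the opposite side (using the symmetry of $\Omega$) yields $u(x)=u(\varphi_0(x))$ for every $x\in\Omega$; strict monotonicity of $u$ in $x_1$ for $x_1>0$ follows by Hopf's Lemma applied to $w_1^{(\lambda)}$ at $T_\lambda\cap\Omega$, which is a $C^\infty$ portion of $\partial\Sl$, for each $\lambda\in(0,\lambda_0)$. The main obstacle is precisely the continuation step: the lack of smoothness of $\partial\Omega$ prevents the use of Hopf's Lemma at $\partial\Omega\cap\partial\Sl$, and it is exactly the structural inequality $\rho_u\circ\varphi_\lambda\ge\rho_u$---which is the only place where the additional convexity assumption on $\{u>t\}$ is essential---that makes Lemma~\ref{MP_coop_sys} applicable as a substitute.
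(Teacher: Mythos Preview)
Your proof is correct and follows essentially the same approach as the paper: the moving plane procedure in the Berestycki--Nirenberg style, with the small-domain maximum principle for cooperative systems (Lemma~\ref{MP_coop_sys}) replacing Hopf's Lemma at the non-smooth boundary. The one organizational difference is that you establish the structural inequality $\rho_u\circ\varphi_\lambda\ge\rho_u$ on all of $\Sl$ for every $\lambda\in[0,\lambda_0)$ once and for all at the outset (using only the symmetry and $x_1$-convexity of $\{u>t\}$), whereas the paper argues it separately---first via a tubular neighborhood of $\partial\Omega$ for the starting step, and then by splitting $\Sigma_{\mu_1-\epsilon}\setminus K$ into a boundary layer $\mathcal T$ and a thin slab $V$ for the continuation step; your uniform treatment is slightly cleaner and makes it transparent that the extra hypothesis on $\{u>t\}$ is used precisely (and only) to ensure the non-negative forcing needed by Lemma~\ref{MP_coop_sys}.
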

\begin{proof} 
As in the proof of Theorem \ref{Main2}, it is enough to prove \eqref{tohold}.
Let $w_i^{(\lambda)}$ be defined as in \eqref{def:w_i}. For every $\lambda\in (\lambda_1, \lambda_0)$, we get
\begin{equation}\label{sys_w}
\begin{cases}
-(\Delta w_1^{(\lambda)}+w_2^{(\lambda)})=0\quad&\mbox{in }\Sl,\\
-\{\Delta w_2^{(\lambda)}+\Theta [(\rho_{u_1}\circ\varphi_\lambda)(u_1\circ\varphi_\lambda)-\rho_{u_1}u_1] \}=0&\mbox{in }\Sl,\\
w_1^{(\lambda)}\ge0,\quad w_1^{(\lambda)}\not\equiv 0&\mbox{on }\partial\Sl,\\
w_2^{(\lambda)}\ge0,\quad w_2^{(\lambda)}\not\equiv 0&\mbox{on }\partial\Sl.
\end{cases}
\end{equation}
Now, the second equation in \eqref{sys_w} can be equivalently rewritten as
$$
-[\Delta w_2^{(\lambda)}+\Theta(\rho_{u_1}\circ\varphi_\lambda)w_1^{(\lambda)}]=\Theta(\rho_{u_1}\circ\varphi_\lambda-\rho_{u_1})u_1\quad	\mbox{in }\Sl.
$$
By  condition $u_1=0$ on $\partial\Omega$ and the $C^{0,\gamma}(\overline\Omega)$-regularity of $u_1$, we know that the set $\{u_1\le t\}$ contains a tubular neighborhood $\mathcal U$ of $\partial\Omega$. Hence, if $\lambda$ is sufficiently close to $\lambda_0$, $\Sl\subset \mathcal U$ and by the expression \eqref{rho} of $\rho_{u_1}$, we get $\rho_{u_1}\equiv h$ in $\Sigma_\lambda$. This implies 
$$
\rho_{u_1}\circ\varphi_\lambda-\rho_{u_1}\ge 0\quad\mbox{in }\Sl.
$$
Therefore, being $u_1>0$ in $\Omega$ by Remark \ref{Rmk5} and Proposition \ref{equivalence}, for $\lambda<\lambda_0$ sufficiently close to $\lambda_0$, we have
\begin{equation*}
\begin{cases}
-(\Delta w_1^{(\lambda)}+w_2^{(\lambda)})=0\quad&\mbox{in }\Sl,\\
-[\Delta w_2^{(\lambda)}+\Theta(\rho_{u_1}\circ\varphi_\lambda)w_1^{(\lambda)}]\ge 0&\mbox{in }\Sl,\\
w_1^{(\lambda)}\ge0,\quad w_1^{(\lambda)}\not\equiv 0&\mbox{on }\partial\Sl,\\
w_2^{(\lambda)}\ge0,\quad w_2^{(\lambda)}\not\equiv 0&\mbox{on }\partial\Sl.
\end{cases}
\end{equation*}
We can now apply the maximum principle in narrow domains for cooperative systems Lemma \ref{MP_coop_sys} in $\Sl$, with 
$$
A:=\left(
\begin{array}{ll}
0 &1\\
\Theta\rho_{u_1}\circ\varphi_\lambda &0
\end{array}
\right)
$$
and $U=(w_1^{(\lambda)},w_2^{(\lambda)})$, to get $w_i^{(\lambda)}\ge 0$ in $\Sl$ for every $i=1,2$. Therefore, for $\lambda$ close enough to $\lambda_0$, $w_1^{(\lambda)}$ and $w_2^{(\lambda)}$ satisfy respectively
$$
\begin{cases}
-\Delta w_1^{(\lambda)}=w_2^{(\lambda)}\ge 0\;&\mbox{in }\Sl,\\
w_1^{(\lambda)}\ge 0,\quad w_1^{(\lambda)}\not\equiv 0 &\mbox{on }\partial\Sl
\end{cases}\;\mbox{ and }\;
\begin{cases}
-\Delta w_2^{(\lambda)}=\Theta (\rho_{u_1}\circ\varphi_\lambda) w_1^{(\lambda)}\ge 0\;&\mbox{in }\Sl,\\
w_2^{(\lambda)}\ge 0,\quad w_2^{(\lambda)}\not\equiv 0 &\mbox{on }\partial\Sl.
\end{cases}
$$
Thus, by the strong maximum principle \cite[Theorem 8.19]{GT}, 
we get 
\begin{equation}\label{toholdBN}
w_{i}^{(\lambda)} > 0 \quad \textrm{in } \Sigma_{\lambda}\quad\mbox{for every }i=1,2
\end{equation}
\noindent and $\lambda<\lambda_0$ sufficiently close to $\lambda_0$.

We can now define $\mu_1$ as 
$$
\mu_1:=\inf\big\{\lambda\in(0,\lambda_0)\,:\,u_i<u_i\circ\varphi_\lambda  \mbox{ in } \Sl\mbox{ for every }i=1,2\big\}.
$$
By \eqref{toholdBN}, $\mu_1$ is well defined and $\mu_1 \geq 0$.
We want to prove that $\mu_1=0$. Suppose by contradiction that $\mu_1>0$. By continuity, 
\begin{equation*}
u_i\le u_i\circ\varphi_{\mu_1} \mbox{ in } \Sigma_{\mu_1}\quad\mbox{ for every }i=1,2.
\end{equation*}
Moreover, since $\mu_1>0$, there exists a point $x_1\in \partial \Sigma_{\mu_1}\setminus T_{\mu_1}$
such that $\varphi_{\mu_1}(x_1) \in \Omega$, and hence
$w_1^{(\mu_1)}(x_1)>0$. This shows that
$w_1^{(\mu_1)}\not\equiv 0$ on $\partial \Sigma_{\mu_1}$.
Therefore, we also have that $w_1^{(\mu_1)}$ solves the following problem
\begin{equation*}
\begin{cases}
-\Delta w_1^{(\mu_1)}=w_2^{(\mu_1)}\ge 0\quad&\mbox{in }\Sigma_{\mu_1},\\
w_1^{(\mu_1)}\ge 0, \quad w_1^{(\mu_1)}\not\equiv 0&\mbox{on }\partial\Sigma_{\mu_1}.
\end{cases}
\end{equation*}
Hence, the strong maximum principle \cite[Theorem 8.19]{GT} 
guarantees that 
$w_1^{(\mu_1)}>0$ in $\Sigma_{\mu_1}$. 
Similarly, we get that $w_2^{(\mu_1)}>0$ in $\Sigma_{\mu_1}$.\\
We want to prove that, if $\epsilon > 0$ is small,
\begin{equation}\label{wi-mu-eps}
w_i^{(\mu_1-\epsilon)}\ge 0\quad\mbox{in }\Sigma_{\mu_1-\epsilon}\quad\mbox{for every }i=1,2.
\end{equation}
This, together with the sign $w_i^{(\mu_1-\epsilon)}\ge 0$, $w_i^{(\mu_1-\epsilon)}\not\equiv 0$ on $\partial\Sigma_{\mu_1-\epsilon}$, would give by the strong maximum principle
\begin{equation*}
w_i^{(\mu_1-\epsilon)}>0\;\;\mbox{ in }\Sigma_{\mu_1-\epsilon}\quad\mbox{for every }i=1,2, 
\end{equation*}
and would contradict the minimality of $\mu_1$, showing that actually $\mu_1=0$. \\
In order to prove \eqref{wi-mu-eps}, let $K\Subset \Sigma_{\mu_1}$ be a compact set, then $w_i^{(\mu_1)}>0$ in $K$ for every $i=1,2$. 
We now claim that there exits $\epsilon_0>0$ so small that for every $\epsilon\in(0,\epsilon_0)$:
\begin{equation}\label{inK}
w_i^{(\mu_1-\epsilon)}>0\;\;\mbox{in }K\quad\mbox{for every }i=1,2.
\end{equation}
Indeed, let 
$$M_i := \min_{x \in K} \left\{ u_i \circ \varphi_{\mu_1}(x) - u_i(x) \right\} >0.$$
By continuity of the $u_i$'s, there exists $\epsilon \ll 1$ such that
$$|u_i \circ \varphi_{\mu_1 - \epsilon}(x)-u_i \circ \varphi_{\mu_1}(x)| \leq \tfrac{M_i}{2} \quad \textrm{for every } x \in K.$$
Hence, for every $x \in K$ and $i=1,2$,
\begin{equation*}
w_i^{(\mu_1-\epsilon)}(x)= u_i \circ \varphi_{\mu_1 - \epsilon}(x) - u_i \circ \varphi_{\mu_1}(x)+u_i \circ \varphi_{\mu_1}(x)-u_i(x) \geq \tfrac{M_i}{2}>0,
\end{equation*}
which proves the claim \eqref{inK}.

\begin{figure}[h]
\includegraphics[scale=0.8]{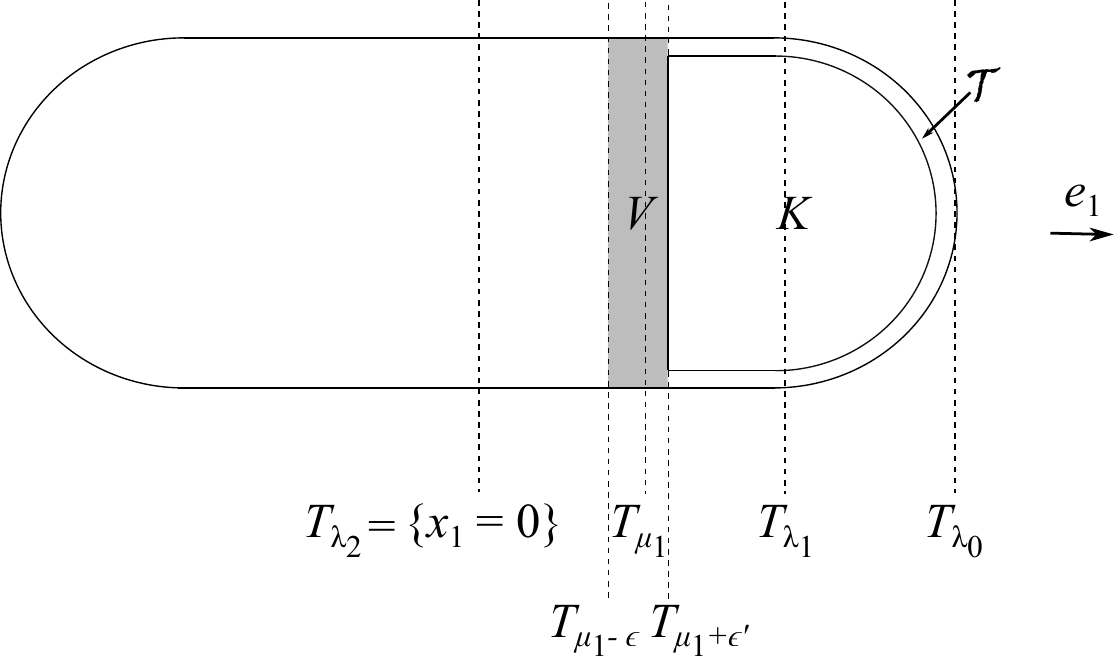}
\caption{Qualitative representation of the domain $\Omega$, its optimal cap $\Sigma_{\mu_1}$ in the assumption by contradiction, and its subsets $K$, $\mathcal T$ and $V$ defined in the proof.}\label{Fig2}
\end{figure}

In particular, we choose $K$ such that the complement of $K$ in $\Sigma_{\mu_1+\epsilon'}$ is contained in a tubular neighborhood of $\partial\Omega$, more precisely  
$$
\mathcal T:= \Sigma_{\mu_1+\epsilon'}\setminus K=\{x\in \Sigma_{\mu_1+\epsilon'}: \mathrm{dist}(x,\partial \Omega)<\epsilon '\}
$$
for some $\epsilon'>0$. 
Therefore, if we denote by $V:=\Sigma_{\mu_1-\epsilon}\setminus\Sigma_{\mu_1+\epsilon'}$, we get
$$
\Sigma_{\mu_1-\epsilon}=K\cup\mathcal T\cup V.
$$
Our aim is to apply the maximum principle for narrow domains in $\Sigma_{\mu_1-\epsilon}\setminus K$, cf. Figure \ref{Fig2}. Taking $\epsilon$ and $\epsilon'$ sufficiently small, we have $|\Sigma_{\mu_1-\epsilon}\setminus K|<\delta$, where $\delta=\delta(n,\mathrm{diam}(\Sigma_{\mu_1-\epsilon}\setminus K))>0$ is given in Lemma \ref{MP_coop_sys}.

In $\Sigma_{\mu_1-\epsilon}\setminus K$, $(w_1^{(\mu_1-\epsilon)}, w_2^{(\mu_1-\epsilon)})$ satisfies the following cooperative system 
$$
\begin{aligned}
&\begin{cases}
-\left(\Delta w_1^{(\mu_1-\epsilon)}+w_2^{(\mu_1-\epsilon)}\right)= 0&\mbox{ in }\Sigma_{\mu_1-\epsilon}\setminus K,\\
-\left(\Delta w_2^{(\mu_1-\epsilon)}+\Theta (\rho_{u_1}\circ\varphi_{\mu_1-\epsilon})w_1^{(\mu_1-\epsilon)}\right)=\Theta\left(\rho_{u_1}\circ\varphi_{\mu_1-\epsilon}-\rho_{u_1}\right)u_1&\mbox{ in }\Sigma_{\mu_1-\epsilon}\setminus K,\\
w_1^{(\mu_1-\epsilon)}\ge 0,\quad w_1^{(\mu_1-\epsilon)}\not\equiv0&\mbox{on }\partial\left(\Sigma_{\mu_1-\epsilon}\setminus K\right)\\
w_2^{(\mu_1-\epsilon)}\ge 0,\quad w_2^{(\mu_1-\epsilon)}\not\equiv0&\mbox{on }\partial\left(\Sigma_{\mu_1-\epsilon}\setminus K\right).
\end{cases}
\end{aligned}
$$
In order to apply Lemma \ref{MP_coop_sys}, we need to prove that 
\begin{equation}\label{sign}\rho_{u_1}\circ\varphi_{\mu_1-\epsilon}-\rho_{u_1}\ge 0\quad\mbox{ in }\Sigma_{\mu_1-\epsilon}\setminus K.
\end{equation} 
This is obviously true in $\mathcal T$, since for $\epsilon'$ sufficiently small, $\rho_{u_1}\equiv h$ in $\mathcal T$, by \eqref{rho}, being $u_1$ continuous in $\overline\Omega$ and $u_1=0$ on $\partial\Omega$.
We now consider $x\in V$. If $u_1(x)\le t$, $\rho_{u_1}(x)=h$ and so $\rho_{u_1}(\varphi_{\mu_1-\epsilon}(x))-\rho_{u_1}(x)\ge0$. If $u_1(x)> t$, we want to show that also $u_1(\varphi_{\mu_1-\epsilon}(x))>t$. Indeed, since $x\in\{u_1>t\}$ and $\{u_1>t\}$ is symmetric and convex with respect to $\{x_1=0\}$, 
$$
\overline{xx^0}:=\{\xi x+(1-\xi)\varphi_0(x)\,:\, \xi\in[0,1]\}\subset\{u_1>t\}.
$$ 
On the other hand, for $\epsilon$ sufficiently small, $\mu_1-\epsilon>0$, and so 
$$
|x-\varphi_{\mu_1-\epsilon}(x)|=2\mathrm{dist}(x,T_{\mu_1-\epsilon})=2[x_1-(\mu_1-\epsilon)]<2x_1=|x-\varphi_0(x)|, 
$$
that is to say $\varphi_{\mu_1-\epsilon}(x)$ belongs to the segment $\overline{xx^0}$. Thus, $u_1(\varphi_{\mu_1-\epsilon}(x))>t$, which gives $\rho_{u_1}(\varphi_{\mu_1-\epsilon}(x))-\rho_{u_1}(x)=0$.
This proves \eqref{sign} and by Lemma \ref{MP_coop_sys} gives 
$$
w_i^{(\mu_1-\epsilon)}\ge 0\;\;\mbox{in }\Sigma_{\mu_1-\epsilon}\setminus K\quad\mbox{for every }i=1,2.
$$
Together with \eqref{inK}, we get \eqref{wi-mu-eps}, which concludes the proof of the proposition.
\end{proof}

\begin{remark}\label{RmkSec3}
We stress that even for the composite membrane problem, the aforementioned convexity of $\{u>t\}$, provided $\Omega$
is convex as well, is proved with some extra assumptions in \cite{CGIKO00} and just conjectured for the general case.
We believe it would be interesting to address this issue in a future work.
\end{remark}



\end{document}